\newcommand{\p}{{\mathbb P}}
\newcommand{\e}{{\mathbb E}}
\newcommand{\Em}{\e_\mu}
\newcommand{\D}{{\mathrm d}}
\newcommand{\R}{{\mathbb R}}
\newcommand{\1}[1]{\mbox{\rm\large  1}_{\{#1\}}}
\renewcommand{\a}{{\alpha}}
\newcommand{\bs}{\boldsymbol}
\newcommand{\vaguely}{\xrightarrow{\scriptstyle v}} 
\newcommand{\weakly}{\xrightarrow{\scriptstyle w}} 
\DeclareMathOperator\VaR{VaR} 
\DeclareMathOperator\var{var} 
\DeclareMathOperator\cov{cov} 
\DeclareMathOperator\corr{corr}
\DeclareMathOperator\Leb{Leb}
\newtheorem{theorem}{Theorem}
\newtheorem{corollary}{Corollary}
\newtheorem{lemma}{Lemma}
\newtheorem{prop}{Proposition}
\newtheorem{remark}{Remark}
\newtheorem{example}{Example}
\begin{document}
\title{Robust bounds in multivariate extremes}

\author[S.\ Engelke]{Sebastian Engelke}
\address{Ecole Polytechnique F\'ed\'erale de Lausanne\\
   EPFL-FSB-MATHAA-STAT\\
   Station 8\\
   1015 Lausanne\\
   Switzerland\\
   sebastian.engelke@epfl.ch}

\author[J.\ Ivanovs]{Jevgenijs Ivanovs}
\address{Aarhus University\\
  Department of Mathematics\\
  Ny Munkegade\\
  DK-8000 Aarhus C\\
  Denmark\\
 jevgenijs.ivanovs@math.au.dk}
\begin{abstract}
Extreme value theory provides an asymptotically justified framework for estimation of exceedance probabilities in regions where few or no observations are available. For multivariate tail estimation, the strength of extremal dependence is crucial and it is typically modeled by a parametric family of spectral distributions. In this work we provide asymptotic bounds on exceedance probabilities that are robust against misspecification of the extremal dependence model. They arise from optimizing the statistic of interest over all dependence models within some neighborhood of the reference model. A certain relaxation of these bounds yields surprisingly simple and explicit expressions, which we propose to use in applications. We show the effectiveness of the robust approach compared to classical confidence bounds when the model is misspecified. The results are further applied to quantify the effect of model uncertainty on the Value-at-Risk of a financial portfolio.
\end{abstract}

\subjclass[2010]{Primary 60G70, 62G32, 62G35}
\keywords{Extremal dependence, Pickands' function, model misspecification,
stress test, robust bounds, convex optimization}

\maketitle
\section{Introduction}

In parametric statistics there are several sorts of uncertainties that arise
in the estimation of an unknown quantity of interest.
The estimation uncertainty, for instance, refers to the error made by inferring 
the model parameters from only finitely many data points. Bootstrapping or results
on asymptotic normality are typically applied to quantify this error and 
to derive confidence intervals.
On the other hand, the parametric family used as a model for the data is a finitely
dimensional subset of all distributions and is thus only
an approximation of the true data generating distribution. The 
uncertainty due to this misspecification is usually called 
model uncertainty, and it is more difficult to quantify than the
estimation uncertainty within a parametric model class.
A popular way to provide confidence bounds, that are robust against wrong model
assumptions, is to find the smallest
and largest values of the statistic of interest with respect to all
probability measures in some neighborhood of the 
estimated parametric distribution assuming that it contains the true data generating distribution.
Moreover, one may view such a search for the worst case as a systematic stress test within a set of plausible scenarios~\cite{breuer_stress_test}.

For a random vector $(X,\bs Y^\top)=(X,Y_1,\ldots,Y_{d-1})$ with $d\geq 2$, in this paper 
we consider the optimization problem
\begin{align}\label{gen_prob}
  V_\mu(\delta)&=\sup_{\p'}\{\e' X:D_\mu(\p',\p)\leq\delta,\e'\bs Y=\e \bs Y\}, \quad \delta > 0,
\end{align}
where the supremum is taken over all probability measures in the $\delta$-neighborhood
of the reference model $\p$ under the constraint that the expectation of $\bs Y$ is preserved.
Here and in the sequel $\e'$ denotes the expectation under the model~$\p'$, and all the measures are defined on a common measurable space $(\Omega,\mathcal F)$. The proximity $D_\mu(\p',\p)$ will be measured 
in terms of the $L_{\mu}^2$-distance between the densities of $\p'$ and $\p$ with respect to some dominating probability measure~$\mu$, which provides additional flexibility in selection of the neighborhood; it will be shown that the choice $\mu=\p$ essentially results in R\'enyi divergence of order~2. 
The random variable $X$ is the statistic of interest and the constraint on the expectation of $\bs Y$ allows to incorporate necessary model restrictions. 
They arise naturally in the application of the 
results to estimation of multivariate tail probabilities. 

Importantly, the optimizing $\mu$-density has an appealing form yielding the surprisingly simple, explicit expression
\begin{align}\label{sq-rt}
  \e X + \sqrt{\delta\frac{\det\{ \Sigma_\mu(X,\bs Y)\}}{\det\{\Sigma_\mu(\bs Y)\}}}
\end{align}
for the optimal value $V_\mu(\delta)$ when $\delta\in[0,\delta^*]$ is in a certain range, and otherwise this expression provides an upper bound on $V_\mu(\delta)$, where $\Sigma_\mu(\cdot)$ denotes the respective $\mu$-covariance matrix.
 In this paper, we advocate using this simple square-root bound, and its analogue for the respective minimization problem, as robust bounds for $\e X$ under moment constraints; see Theorem~\ref{thm:main}.
Interestingly, the above fraction of the determinants is a well-known expression in stochastic simulation theory where it arises as the minimal variance of $X+\bs c^\top\bs Y,$ for arbitrary $\bs c\in\mathbb R^{d-1}$~\cite[Sec.\ V.2]{simulation}.

The general optimization problem \eqref{gen_prob} might be interesting in many
different situations, see e.g.~\cite{dey_juneja, glasserman2014robust, breuer_stress_test} for applications of the robust approach to various problems in economics, risk and finance. Let us also note that a problem similar to~\eqref{gen_prob} appears as the dual representation of a coherent risk measure~\cite{ahmadi,delbaen}.
In this work we concentrate on the application to
the risk of rare events and the estimation of their
small tail probabilities, a field that has attracted strong attention in the last decade.
Extreme value theory provides 
the theoretical foundation
for statistical extrapolation into tail regions with few or no data;
see \cite{emb1997, deh2006a, res2008} for more details.

The univariate theory is well understood and is concerned with the quantification
of tail probabilities $\p(Z > z)$ of a random variable $Z$, where $z>0$ is a threshold
close to the upper end point of its distribution function~$F$.
There are standard procedures to build confidence intervals for estimators of $\p(Z>z)$,
but bounds that are robust against violation of the assumptions of the extremal types theorem
have only recently been studied in \cite{blanchet2016extreme}. The authors of this paper
solve the optimization problem
$$ \overline F_{\delta}(z) = \sup_{\p'} \{\p' (Z > z): \widehat D(\p',\p) \leq \delta\},$$
where $\overline F_{\delta}$ is the worst case survival function over all 
probability measures $\p'$ in some divergence neighborhood with radius $\delta>0$ around the reference model~$\p$. Here $\widehat D$ is either the Kullback--Leibler divergence or the R\'enyi divergence of an arbitrary order; see also Section~\ref{appendix_KL}.
It is shown in~\cite{blanchet2016extreme} that the worst case tail $\overline F_{\delta}$
is considerably heavier than the one of the reference distribution $F$.

For a $d$-dimensional random vector $\bs Z = (Z_1,\ldots, Z_d)^\top$, multivariate extreme value theory
studies probabilities $\p( \bs Z \in tB)$, where for $B\subset [0,\infty]^d$ 
bounded away from the origin and large $t>0$ the dilated set $t B$ is called a tail region.
As in the univariate case, the idea is to extrapolate from regions with more
data into the tails, but in the multivariate case the dependence between components $Z_i$
at high quantiles is crucial. 
The mathematical concept of regular variation is
needed in order to perform this extrapolation.
Assuming that $\bs Z$ is standardized to have unit Pareto marginal tails, multivariate extreme value theory justifies, in particular, the following approximation for any $z_i>0$ and large~$t>0$:
\begin{equation}\label{eq:approx_main}
\p(\exists i:Z_i>t z_i)\approx t^{-1} d \,\e \left(\max_{i=1}^d \frac{Y_i}{z_i}\right), 
\end{equation}
where $\bs Y = (Y_1,\dots, Y_d)$ takes values in the standard simplex and satisfies certain moment constraints, i.e.,
\[\bs Y\in\mathbb S^{d-1}=\left\{\bs y\in[0,\infty]^d:\sum_{i=1}^dy_i=1\right\}\qquad\text{ and }\qquad\forall i:\e Y_i=1/d.\]
The distribution of $\bs Y$ is called a \emph{spectral distribution} and it encodes extremal dependence in the model. Many parametric models have been proposed for the spectral distribution \cite[e.g.,][]{hue1989, taw1990, bol2007,coo2010}. For a non-parametric approach to estimation of the spectral distribution we refer the reader to~\cite{einmahl_segers}, where an optimization problem is used to enforce the moment constraint.

A natural problem is to find 
bounds for the asymptotic expression of the tail probability in~\eqref{eq:approx_main} with fixed $\bs z =(z_1, \dots, z_d)^\top$ that are robust
against model misspecification of the spectral distribution, i.e., the distribution of~$\bs Y$. For the upper bound we are thus interested in the
maximization problem 
\begin{align}\label{max_prob}
  \sup_{\p'}\left\{\e' \left(\max_{i=1}^d\frac{Y_i}{z_i}\right) :D_\mu(\p',\p)\leq\delta,\e' Y_i= 1 / d \text{ for all }i\right\}
\end{align}
which is clearly a special case of~\eqref{gen_prob} with $X=X(\bs z)=\max_{i=1}^dY_i/z_i$.
Importantly, we assume here that the dominating measure $\mu$ is supported by~$\{\bs Y\in\mathbb S^{d-1}\}$ and hence $\bs Y\in \mathbb S^{d-1}$ holds also~$\p'$-a.s. 
In particular, $Y_d=1-\sum_{i=1}^{d-1} Y_i$ and so there are essentially $d-1$ moment constraints which ensure that $\bs Y$ has a valid spectral distribution also under the measure~$\p'$.

 Similarly, a lower bound can be defined as
the optimal value of the corresponding minimization problem with $\sup$ replaced by $\inf$ in \eqref{max_prob}.  The respective optimal values $\beta^*(\bs z)$ and $\beta_*(\bs z)$ of these optimization problems readily yield the robust asymptotic  bounds
\begin{align*} t^{-1} d \,\beta_*\left(\bs z\right)\lesssim \p(\exists i:Z_i>tz_i)\lesssim t^{-1} d \,\beta^*\left(\bs z\right), \qquad \text{as }t\to\infty.
\end{align*}
Note that according to~\eqref{eq:approx_main} it is enough to consider $\bs z\in\mathbb S^{d-1}$.
It should also be stressed that our bounds address misspecification of the extremal dependence model exclusively, and so they are guaranteed to hold for sufficiently large scaling factor~$t$ only. 
Furthermore, we essentially optimize over the class of max-stable distributions, which is different from the univariate case analysis in~\cite{blanchet2016extreme}.


In Section \ref{sec:model_risk} we provide details on the divergence $D_\mu(\p',\p)$,
and recall necessary results on multivariate
extreme value theory, regular variation and spectral measures.
The convex optimization problem \eqref{gen_prob} is solved in Section \ref{sec:convex} and the simple square-root bound for the optimal value~$V_\mu(\delta)$ is derived in Section~\ref{sec:rob_bounds}, where we also identify a necessary and sufficient condition for this upper bound to coincide with~$V_\mu(\delta)$. Based on these general results, in Section~\ref{sec:pickands_bounds} 
we investigate robust bounds for small
probabilities of tail regions in the bivariate case arising from the optimization problem \eqref{max_prob} for~$d=2$. Several examples are given in Section~\ref{sec:illustration} to illustrate the
results. In Section \ref{sec:exp} we conduct an experiment that 
shows the effectiveness of
the robust bounds compared to classical confidence bounds when the model
is misspecified. As a further application of our theory, Section \ref{sec:var} discusses 
how worst case bounds on the Value-at-Risk of a financial portfolio under model uncertainty
can be derived.
The Appendix contains some parametric families of spectral distributions, further comments about the degenerate maximizer of the problem in~\eqref{max_prob},
and results on optimization for other divergences.


\section{Preliminaries and the setup}\label{sec:model_risk}
\subsection{Distribution model risk}\label{sec:divergence}
Distribution model risk refers to the error made when using a simplified model of reality that is only an approximation to the data generating process. From a probabilistic point of view,
this amounts to computing the quantity of interest, say the probability $\p(A)$ of some event $A$, using a wrong probability measure $\p$, which nevertheless is close in some sense to the true measure~$\p_{\textup{true}}$. The robust approach to this problem is to consider all measures~$\p'$ in some neighborhood of~$\p$ that 
should contain $\p_{\textup{true}}$ as well, and to find the maximal and the minimal values among all~$\p'(A)$.
These numbers then provide robust bounds on the true value $\p_{\textup{true}}(A)$.
This approach has become quite popular in financial mathematics, see~\cite{hansen2001robust,ahmadi,breuer2013measuring,glasserman2014robust} and references therein, and~\cite{blanchet2016extreme} for an application to univariate extreme value statistics.

A natural way to define a neighborhood of measures around $\p$ is to consider 
some form of divergence. 
Fix a dominating probability measure~$\mu$, i.e., such that $\p\ll\mu$,
and suppose for now that $\p'\ll\mu$. Letting $L=\D\p/\D \mu$ and $L'=\D\p'/\D \mu$ be the corresponding Radon--Nikodym derivatives we consider the standard squared $L^2_\mu$-distance
\begin{equation}\label{eq:divergence_ref}D_\mu(\p',\p)=\e_\mu(L'-L)^2,\end{equation}
where $\e_\mu$ denotes the expectation under probability measure~$\mu$. We put $D_\mu(\p',\p)=\infty$ if $\p'$ is not absolutely continuous with respect to~$\mu$. It is noted that~\eqref{eq:divergence_ref} is a special case of the so-called Bregman divergence, see, e.g.,~\cite{breuer2013measuring}.
Furthermore, by choosing $\mu=\p$ we get
\begin{equation}\label{eq:divergence}D_\p(\p',\p)=\e(L'-1)^2=\e L'^2-1\end{equation}
for all $\p'\ll\p$ with $L'=\D\p'/\D\p$. Moreover, 
\[D_\p(\p',\p)\leq \delta\qquad\text{iff}\qquad \log\e L'^2\leq \log(1+\delta)=\delta',\]
where $\log\e L'^2$ is the well-known R\'enyi (power) divergence of order~$2$ of $\p'$ from~$\p$. In other words, neighborhoods of measures defined by $D_\p(\cdot,\p)\leq \delta$ coincide with second order R\'enyi divergence neighborhoods with radius~$\delta'$.

It is clear that the choice of the dominating measure~$\mu$ has an impact on the solution of the optimization problem~\eqref{gen_prob}.
Suppose, for instance, that $\p,\p'$ and $\mu$ are defined on $[0,1]$ and that they are absolutely continuous with respect to Lebesgue measure with densities $f,f'$ and $g$, respectively.
Then it holds that
\begin{equation}\label{eq:unif}D_\mu(\p',\p)=\int_0^1\left(\frac{f'(\omega)}{g(\omega)}-\frac{f(\omega)}{g(\omega)}\right)^2 g(\omega)\D \omega=\int_0^1(f'(\omega)-f(\omega))^2 \frac{1}{g(\omega)}\D \omega,\end{equation}
and so $\mu$ provides a mechanism of weighing the squared distance between $f'$ and~$f$. A similar weight function appears in e.g.~\cite{bucher_pickands_fun} in the context of estimating the Pickands' function.
Thus the dominating measure $\mu$ may be chosen according to our uncertainty about the measure~$\p$. 

In this study we leave out a detailed analysis of the choice of~$\mu$. 
Our default choice in applications to multivariate extremes is $\mu=\p$, which corresponds to R\'enyi divergence of order~2. We also provide an example where this choice is inappropriate, in which case the uniform dominating measure is used. Finally, the remaining parameter $\delta>0$, representing our trust into the measure $\p$, has to be chosen by hand or derived from data. In Section~\ref{sec:exp} we use a straightforward heuristic procedure to estimate it from data. 

\subsection{Regular variation and spectral distributions}\label{sec:reg_var}
A $d$-dimensional random vector $\bs Z$ is \emph{multivariate regularly varying} in the non-negative orthant if there exists a sequence $a_t\to\infty$, as $t\to \infty$, and a Radon measure $\nu$ on $E=[0,\infty]^d\backslash\{0\}$ equipped with its Borel $\sigma$-algebra such that 
\begin{equation}\label{eq:reg_var}t\p(\bs Z/a_t\in\cdot)\vaguely\nu,\qquad t\to\infty\end{equation}
in the sense of vague convergence, see, e.g.,~\cite[Ch.\ 6]{resnick_heavy}.
The so-called \emph{exponent measure} $\nu$ then satisfies the scaling property
$\nu(t B)=t^{-\a}\nu(B)$
for all $t>0$ and all Borel sets~$B\subset E$ bounded away from~0, where $\a>0$ is called the \emph{tail index} of regular variation. 
Moreover, by switching to polar coordinates $\bs  z\mapsto (\|\bs z\|,\bs z/\|\bs z\|)=(r,\bs \omega)$ for the $L_1$-norm $\|\bs z\|=\sum_{i=1}^d|z_i|$ on $\R^d$, the measure $\nu$ factorizes into
\[c \, \a r^{-{\a-1}}\D r\times H(\D \bs\omega),\]
where $c>0$ and $H$ is a probability measure, called the \emph{spectral measure}, on the simplex $\mathbb S^{d-1}$ equipped with its Borel $\sigma$-algebra. Importantly, \eqref{eq:reg_var} implies the following weak convergence to~$H$:
\begin{equation}\label{eq:limH}\p\left(\left.\frac{\bs Z}{\|\bs Z\|}\in\cdot\, \right|\, \|\bs Z\|>t\right)\weakly H, \qquad \text{as } t\to\infty.
\end{equation}

Without loss of generality we assume that $\nu$ is non-degenerate in the sense that 
$\nu(\{\bs z:z_i>1\})\neq 0$ for all~$i=1,\dots, d$. Otherwise, we may simply remove the components of the vector~$\bs Z$ that decrease at a faster rate. This implies that all marginal survival functions $\overline F_i(z)=1-F_i(z)$ are regularly varying with the same index~$-\a$, and, moreover, for some $m_i>0$, 
\begin{equation}\label{eq:Fequiv}\frac{\overline F_i(z)}{\overline F_1(z)}\to m_i\qquad\text{ as }z\to\infty\end{equation}
with $m_1=1$. That is, $\overline F_i(z)$ are equivalent in the limit up to multiplicative constants. 

It is common to split the problem of multivariate tail estimation into estimation of marginal tails and estimation of the spectral distribution.
The theory for univariate tail estimation is well-studied and there are many
established methods to estimate the survival functions~\cite{deh2006a,resnick_heavy}.
We therefore assume that the marginal tail models are continuous
and correctly specified, and that the $Z_i$ have been transformed to
unit Pareto tails. That is, we generally assume that $\a=1$ and $\overline F_i(z)=1/z$ for large $z$,
apart from Section~\ref{sec:var}, where we return to the general setup and the issue of standardization.

With the above standardization in mind we may choose $a_t=t$ in~\eqref{eq:reg_var} leading to the approximation
\[\p(\bs Z\in tB)\approx \nu(B)/t\]
for large $t$ and $B$ bounded away from the origin with $\nu(\partial B) = 0$.
A natural choice of such a set is given by $B_{\bs z}=E\backslash [\bs 0,\bs z]$, where we may assume that $\|\bs z\|=1$ because of the scaling property of $\nu$. That is, we are interested in approximating the probability that at least one marginal is relatively large, namely $Z_i>tz_i$ for some~$i$. 
Letting $\bs Y\in\mathbb S^{d-1}$ have the spectral distribution~$H$ one finds that
\begin{equation}\label{eq:nuBz}
\nu(B_{\bs z})=c\e\int \1{\exists i:rY_i>z_i} r^{-2} \D r=c\e \left(\max_{i=1}^d\frac{Y_i}{z_i}\right).
\end{equation}
Moreover, according to the above standardization the exponent measure must satisfy
\[t\p(Z_i>t)\to 1=\nu(\{\bs z:z_i>1\})=c\e Y_i\]
and hence $\e Y_i=1/c$ for all~$i$. But since $\sum_{i=1}^d Y_i=1$, it must be that $c=d$,
which yields the approximation in~\eqref{eq:approx_main}, our starting point for the robust approach.
 Importantly, any $\bs Y$ satisfying these moment constraints gives rise to a valid spectral measure.
 
\begin{remark}\label{rem:norm} 
The sum norm used throughout this paper is special in the sense that the constant $c$ does not depend on the spectral measure. This makes it possible to employ the optimization problem in~\eqref{gen_prob}. Other norms would lead to the objective~$\e' X(\bs z)/\e' Y_1$, which does not comply with~\eqref{gen_prob}.
\end{remark}

A common way of representing the dependence structure in the bivariate case~\cite{pickands,klu2006,bucher_pickands_fun} is by means of the so-called
\emph{Pickands' function}
\begin{equation}\label{eq:pickands}
A(z)=2\e\{(1-z)Y_1\vee z(1-Y_1)\},\qquad z\in[0,1].
\end{equation}
Indeed, an easy transformation of~\eqref{eq:nuBz} yields
\[\nu(B_{\bs z})=\left(\frac{1}{z_1}+\frac{1}{z_2}\right)A\left(\frac{z_1}{z_1+z_2}\right).\]
Importantly, the Pickands' dependence function $A:[0,1]\to [1/2,1]$ is convex and satisfies $z\vee(1-z) \leq A(z) \leq 1$. Moreover, any such function defines a unique exponent measure~$\nu$, see~\cite[p.\ 226]{deh2006a}.

\section{Convex optimization}\label{sec:convex}
In this section we solve the optimization problem~\eqref{gen_prob} which, according to~\eqref{eq:divergence_ref}, can be rewritten in the convenient form
\begin{align}\label{eq:opt}
V_\mu(\delta)
&=\sup_{L'\geq 0}\{\Em(L'X): \Em L'=1,\Em(L'-L)^2\leq \delta,\Em(L'\bs Y)=\Em(L\bs Y)\},
\end{align}
where the supremum is taken over all measurable functions $L':\Omega \to [0,\infty)$ satisfying the stated constraints.
This is a convex optimization problem in an infinite dimensional space allowing for a rather explicit solution given in Theorem~\ref{thm:opt}. For related results without moment constraints see~\cite{breuer2013measuring,blanchet2016extreme,dey_juneja,glasserman2014robust}. The latter two works also provide short derivations based on the strong duality theorem. There is, however, no reference to the strong duality theorem for infinite dimensional spaces which does require verification of certain conditions. Moreover, the issue with a distribution of~$X$ with some mass at its right end is not addressed in the literature.

\subsection{The underlying measurable space}\label{sec:measure_space} 
Before solving the optimization problem~\eqref{gen_prob} or its equivalent version~\eqref{eq:opt}, let us comment on the underlying measurable space~$(\Omega,\mathcal F)$. Letting $\mathcal G=\sigma(X,\bs Y)\subset \mathcal F$ we assume that $L=\D \p /\D\mu$ is $\mathcal G$-measurable. That is, the choice of the dominating measure~$\mu$ does not introduce additional randomness in the model, which is trivially the case for our default choice~$\mu=\p$.
The optimization problem~\eqref{gen_prob} formulated on the measurable space $(\Omega,\mathcal F)$ and its analogue formulated on the measurable space $(\Omega,\mathcal G)$ lead to the same optimal value~$V_\mu(\delta)$. This follows from Jensen's inequality:
\[\e_\mu(L'-L)^2=\e_\mu\left(\e_\mu[(L'-L)^2|\mathcal G]\right)\geq \e_\mu(\e[L'|\mathcal G]-L)^2,\]
where the latter is the respective divergence on $(\Omega,\mathcal G)$. 
Therefore, we may always consider the induced distributions of $(X,\bs Y^\top)$ without changing the robust bounds. In the setting of~\eqref{max_prob} we may thus work on the Borel $\sigma$-algebra of $\mathbb S^{d-1}$. In fact, this can be seen as the modeling choice requiring little justification.

\subsection{The optimal Radon--Nikodym derivative}
Let us immediately present the solution to the optimization problem~\eqref{eq:opt}. It is noted that the proof of this result provides good intuition on the form of the solution. Throughout the paper, we will denote a maximizer of \eqref{eq:opt}, if it exists, by $L^*$, and for any random variable $X$ we put $\e^* X = \e_\mu (L^* X)$.
\begin{theorem}\label{thm:opt}
Assume that $\Em X^2,\Em Y^2_i,\Em L^2<\infty$ and let $\e \bs Y=\bs y$.
Then $L^*$ is a maximizer of the optimization problem~\eqref{eq:opt} if and only if $\Em L^*=1,\Em (L^*\bs Y)=\bs y$ and at least one of the following holds:
\begin{itemize}
\item[(i)] there exist $a>0,b, c_i\in \R, i=1,\dots, d-1$, such that 
\begin{gather*}L^*=\left(a X+b+\bs c^\top \bs Y+L\right)_+\quad \mu\text{-a.s.\ and }\qquad
\Em (L^*-L)^2=\delta.
\end{gather*}
\item[(ii)] there exist $c_i\in \R, i=1,\dots, d-1$, such that the distribution of $X+\bs c^\top\bs Y$ under $\mu$ has a positive mass at its upper end, $L^*=0$ everywhere else $\mu$-a.s., and the constraint $\Em(L^*-L)^2\leq \delta$ holds.
\end{itemize}
\end{theorem}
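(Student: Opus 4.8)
The plan is to treat \eqref{eq:opt} as a constrained convex optimization problem and apply a Lagrangian / Karush--Kuhn--Tucker argument, being careful to justify the variational characterization rigorously in the infinite-dimensional setting. First I would set up the Lagrangian. The feasible set is the convex set of $L' \in L^2_\mu$ with $L' \geq 0$, $\Em L' = 1$, $\Em(L'\bs Y) = \bs y$, and $\Em(L'-L)^2 \leq \delta$. The objective $L' \mapsto \Em(L'X)$ is linear, and the quadratic constraint set is convex, so this is a genuine convex program. I would introduce multipliers $b \in \R$ for the normalization constraint, $\bs c \in \R^{d-1}$ for the moment constraints, and a multiplier $\lambda \geq 0$ for the divergence constraint, and write the Lagrangian
\[
\mathcal L(L') = \Em(L'X) + b(1 - \Em L') + \bs c^\top(\bs y - \Em(L'\bs Y)) + \tfrac{\lambda}{2}\left(\delta - \Em(L'-L)^2\right).
\]

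Next I would derive the pointwise stationarity condition. Because the nonnegativity constraint $L' \geq 0$ is an inequality that couples only pointwise, I expect the first-order condition to be a pointwise (complementary-slackness) statement: at $\mu$-a.e.\ $\omega$, maximizing the integrand over $L'(\omega) \geq 0$ while accounting for the quadratic penalty yields $L^* = \left(\frac{1}{\lambda}(X - b - \bs c^\top \bs Y) + L\right)_+$, which after relabeling constants $a = 1/\lambda > 0$ is exactly the form in part (i). The ``$(\cdot)_+$'' arises precisely because the unconstrained stationary point may be negative, in which case the nonnegativity constraint is active and $L^* = 0$. Complementary slackness for the divergence constraint forces either $\lambda > 0$ together with $\Em(L^*-L)^2 = \delta$, or $\lambda = 0$; I would argue the latter degenerate regime corresponds to case (ii). When $\lambda \to 0$, the penalty on deviation vanishes and the optimizer concentrates all mass where the linear functional $X + \bs c^\top \bs Y$ attains its essential supremum, giving a point mass at the upper end and $L^* = 0$ elsewhere, with the divergence constraint possibly slack.

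For the two directions of the equivalence: the ``if'' direction is the easier one. Given $L^*$ of the stated form satisfying the feasibility conditions, I would verify optimality directly by a first-variation / supporting-hyperplane argument: for any feasible $L'$, convexity of the constraint and the explicit form of $L^*$ give $\Em((L'-L^*)(X - b - \bs c^\top\bs Y)) \leq a^{-1}\Em((L'-L^*)(L^*-L))$, and the quadratic constraint combined with $\Em(L^*-L)^2 = \delta$ (or the point-mass structure in case (ii)) forces the right-hand side to be nonpositive, whence $\Em(L'X) \leq \Em(L^*X)$. The ``only if'' direction requires establishing that a maximizer, if it satisfies the first-order conditions, must take one of these two forms, essentially running the KKT derivation in reverse.

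The hard part will be the rigorous justification of strong duality and the existence/attainment of multipliers in this infinite-dimensional $L^2_\mu$ setting, since the authors explicitly flag that the literature invokes strong duality ``without reference to the strong duality theorem for infinite dimensional spaces.'' I would sidestep a full duality machinery by instead arguing existence of a maximizer via weak compactness (the feasible set is a bounded, closed, convex subset of the Hilbert space $L^2_\mu$, hence weakly compact, and the objective is weakly continuous under the moment assumptions $\Em X^2 < \infty$), and then deriving the pointwise Euler--Lagrange condition directly from admissible perturbations $L^* + \epsilon(L' - L^*)$. The delicate point is the mass-at-the-upper-end phenomenon of case (ii): when the distribution of $X + \bs c^\top \bs Y$ has an atom at its essential supremum, the optimizer can degenerate, and a standard smooth first-order argument breaks down. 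Handling this boundary case carefully — showing it is exactly the complementary regime where $\lambda = 0$ and no interior stationary density exists — is what I expect to demand the most care, and it is presumably the gap the authors say is ``not addressed in the literature.''
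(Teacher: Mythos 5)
Your overall architecture matches the paper's: a Lagrangian with multipliers $a\geq 0$ (divergence), $b$ (normalization), $\bs c$ (moments), complementary slackness forcing either $a>0$ with $\Em(L^*-L)^2=\delta$ or the degenerate regime $a=0$, and the positive-part form arising from the pointwise nonnegativity constraint; your reading of case (ii) as the $a=0$ regime where mass concentrates at the essential supremum of $X+\bs c^\top\bs Y$ is exactly how the paper obtains it. The genuine gap is in the step where you propose to ``sidestep a full duality machinery'' by proving existence of a maximizer via weak compactness and then deriving the pointwise Euler--Lagrange condition ``directly from admissible perturbations $L^*+\epsilon(L'-L^*)$.'' If the perturbation directions $L'$ are required to stay in the feasible set, the resulting variational inequality is just $\Em((L'-L^*)X)\leq 0$ for feasible $L'$ --- the tautological statement that $L^*$ maximizes a linear functional over a convex set --- and it does not produce the multipliers $a,b,\bs c$ or the pointwise affine form. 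Extracting those multipliers from the variational inequality is precisely the content of an infinite-dimensional KKT/separation theorem, so the shortcut collapses back into the duality step you were trying to avoid. (Weak compactness would give you existence of a maximizer, which the theorem does not actually claim; it is a characterization conditional on existence.)

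The paper resolves this by citing a strong duality theorem for infinite-dimensional convex programs (Mitter, Thm.~4) and verifying Slater's condition with the trivial feasible point $L'=L$, for which $\Em(L-L)^2=0<\delta$. The payoff of that step is exactly the freedom your perturbation argument lacks: once the problem is reduced to maximizing the concave Lagrangian over \emph{all} nonnegative square-integrable $L'$, one can test the one-sided derivative $g'_{L^*,L'}(0+)\leq 0$ against arbitrary $L'\geq 0$ (justifying differentiation under the expectation sign), and the condition $\Em\bigl[(L'-L^*)(X+b+\bs c^\top\bs Y-2a(L^*-L))\bigr]\leq 0$ for all such $L'$ then forces the pointwise inequality $X+b+\bs c^\top\bs Y-2a(L^*-L)\leq 0$ with $L^*=0$ where it is strict, giving (i) for $a>0$ and (ii) for $a=0$ (where $\mu(X+\bs c^\top\bs Y=-b)>0$ is needed to avoid $\Em L^*=0$). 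So the fix is not to avoid duality but to invoke it with the Slater point $L'=L$; with that in place the rest of your outline, including the ``if'' direction via concavity of the dual objective, goes through.
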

\begin{proof} 
Note that $\Em L'^2\leq 2\{\Em (L'-L)^2+\Em L^2\}<\infty$ if $\Em (L'-L)^2\leq \delta$. So we may consider a normed vector space of $\mu$-square-integrable $L'$ and its convex subset defined by the additional requirement of $L'\geq 0$. Note also that $\Em(L'|X|),\Em(L'|Y_i|)<\infty$.
Next, for the convex optimization problem~\eqref{eq:opt} we define the corresponding Lagrangian:
\begin{equation}\label{eq:dual}\mathcal L(L')=\Em(L'X)-a(\Em (L'-L)^2-\delta)+b(\Em L'-1)+\bs c^\top(\Em(L'\bs Y)-\bs y),\end{equation}
where $a\geq 0,b,c_i\in\R$. The strong duality theorem, see e.g.~\cite[Thm.\ 4]{millar}, asserts that $L^*$ is a maximizer of the original problem if and only if $L^*$ is a maximizer of $\sup_{L'\geq 0}\mathcal L(L')$ for some $a\geq 0,b,c_i\in\R$, such that the constraints hold as well as so-called complementary slackness: 
\[\Em L^*=1,\quad \Em(L^*\bs Y)=\bs y,\quad \Em (L^*-L)^2\leq \delta,\quad a(\Em (L^*-L)^2-\delta)=0.\] For this result to be true it is sufficient to verify Slater's condition: $\exists L'\geq 0$ such that $\Em (L'-L)^2<\delta$ and $\Em L'=1,\Em(L' \bs Y)=\bs y$, but this is clearly satisfied by $L'=L$.

Hence it is left to solve the dual problem $\sup_{L'\geq 0}\mathcal L(L')$ for fixed $a\geq 0,b,c_i\in\R$. Since $\mathcal L(L')$ is concave in $L'$, a sufficient and necessary condition for a maximizer $L^*$ of the dual problem is 
\[g_{L^*,L'}'(0+)\leq 0\quad \forall L'\geq 0, \text{ where}\qquad g_{L^*,L'}(t)=\mathcal L(L^*(1-t)+L't),\]
that is, one looks down from~$L^*$.
But $g_{L^*,L}(t)$ is given by
\[\Em\left\{(L^*(1-t)+L't)(X+b+\bs c^\top\bs Y)-a(L^*(1-t)+L't-L)^2+a\delta-b-\bs c^\top\bs y\right\},\]
which can be differentiated under the expectation sign, see e.g.~\cite[A16]{williams}, yielding
\begin{equation}\label{eq:ineq}\Em(L'-L^*)(X+b+\bs c^\top\bs Y-2a(L^*-L))\leq 0,\quad\forall L'\geq 0.\end{equation}
This implies that $X+b+\bs c^\top \bs Y-2a(L^*-L)\leq 0$ and $L^*=0$ when the inequality is strict $\mu$-a.s., because otherwise we may choose $L'\geq 0$ to invalidate~\eqref{eq:ineq}. But the latter clearly implies~\eqref{eq:ineq} and so we have the equivalence.  Thus for $a>0$ we get
\[L^*=\left(\frac{X+b+\bs c^\top\bs Y}{2a}+L\right)_+\quad \mu\text{-a.s.},\]
which is equivalent to~(i). If $a=0$ then $X+\bs c^\top \bs Y\leq -b$ and $L^*=0$ when the inequality is strict $\mu$-a.s. Hence $\mu(X+\bs c^\top\bs Y=-b)>0$, because otherwise $\Em L^*=0$. This yields~(ii).
\end{proof}

\begin{remark}\label{rem:case_ii}
Suppose that for some $\delta'>0$ there is $L^*$ as in (ii) of Theorem~\ref{thm:opt} which also satisfies the equality constraints. Then such $L^*$ must be a maximizer of~\eqref{eq:opt} for any $\delta\geq \delta'$. This implies that the corresponding optimal value $V_\mu(\delta')$ is the maximal possible for any~$\delta> 0$, and in particular it does not increase with further increase of~$\delta$.
In the following we let $\delta^{**}$ be the minimal such $\delta'$, and $\delta^{**}=\infty$ if no such $\delta'$ exists. The optimizer $L^*$ then has the form given in (ii) of Theorem~\ref{thm:opt} if and only if $\delta\geq \delta^{**}$.
\end{remark}

We believe that some further clarification of Theorem~\ref{thm:opt} is necessary. Normally, we only need to look at (i), whereas (ii) corresponds to a rather pathological case explained in Remark~\ref{rem:case_ii}. A necessary condition for the latter is that $\delta$ is sufficiently large, $\delta\geq \delta^{**}$, and also that
$(X,\bs Y^\top)$ satisfies the condition mentioned in (ii) for some~$\bs c$, because otherwise $\delta^{**}=\infty$.
The following two simple examples will provide some further intuition. 

\begin{example}
Consider the optimization problem without moment constraints when $\mu=\p$ and the distribution of $X$ has a positive mass $p>0$ at its upper end~$\overline x$. Then the optimizer $L^*$ in (ii) puts all the mass on $\{X=\overline x\}$ achieving $\e^*X=\overline x$ which is the maximal possible value for any $\delta>0$. But we must have
\[\delta+1\geq \e {L^*}^2=p\e({L^*}^2|X=\overline x)\geq p\e^2 (L^*|X=\overline x)=1/p,\]
because $\e(L^*|X=\overline x)=1/p$.
So if $\delta\geq 1/p-1$ then we can choose $L^*=1/p\1{X=\overline x}$ yielding the maximal possible optimal value $\overline x$, but otherwise we must consider $L^*$ from~(i). In particular, we have $\delta^{**}=1/p-1$.
\end{example}
\begin{example} This example shows that in general (ii) does not require the distribution of $X$ to have a mass at its upper end. Take $\mu=\p$ and consider the case of one constraint where $X=Y1_{A}$ for some event $A$ and $Y>0$ on~$A^c$. Hence $X-Y=-Y1_{A^c}\leq 0$ a.s.\ meaning that $X-Y$ has mass $\p(A)$ at~0.  Clearly, $\e' X\leq \e' Y=y$ for any $\p'\ll\p$, whereas $\e^* X=\e^* Y=y$ if $L^*$ puts all the mass on $A$. It is only left to ensure that there is such $L^*$ preserving the expectation of~$Y$. 
\end{example}

The following observation will later lead to the square-root bound \eqref{sq-rt}, an upper bound on $V_\mu(\delta)$.
\begin{remark}\label{rem:nopositivity}
If the non-negativity constraint on~$L'$ is removed in the optimization problem~\eqref{eq:opt} then 
\[\hat L=aX+b+\bs c^\top \bs Y+L, \qquad a>0,b,c_i\in\R\]
satisfying $\Em \hat L=1,\Em (\hat L-L)^2=\delta,\Em (\hat L\bs Y)=\bs y$ is a maximizer. 
\end{remark}

Finally, we note that the optimization problem~\eqref{gen_prob} can be solved for some other popular divergences such as R\'enyi and Kullback--Leibler divergences; see Appendix~\ref{appendix_KL} for details. In both cases it is assumed that the dominating measure~$\mu$ coincides with~$\p$. In this paper, however, we aim at providing simple explicit bounds while giving flexibility in defining the neighborhoods of measures by choosing an appropriate dominating measure~$\mu$, and therefore we exclusively focus on the divergence $D_\mu(\p',\p)$ defined in~\eqref{eq:divergence_ref}.

\subsection{Computing the optimal value}\label{sec:computation}
Let us consider the main case (i) of Theorem~\ref{thm:opt} where $L^*=(a X+b+\bs c^\top\bs Y+L)_+$. In order to find $a>0,b,c_i\in\R$, $i=1,\dots, d-1,$ we need to solve a system of $d+1$ corresponding equations:
\begin{equation}\label{eq:3eqns}\Em(L^*-L)=0,\quad \Em(L^*-L)^2=\delta, \quad \Em\{(L^*-L)\bs Y\}=\bs 0.\end{equation}
If a solution is found then the optimal value is given by~$V_\mu(\delta)=\Em(L^*X)$. Note, however, that the maximizer~$L^*$ may be of a different form given in case (ii) of Theorem~\ref{thm:opt}, which corresponds to $\delta\geq \delta^{**}$ and the maximal possible optimal value. In some cases $\delta^{**}$ has an explicit formula, whereas in some other cases identification of $\delta^{**}$ requires solving yet another convex optimization problem. This latter problem usually can be avoided in practice when plotting $V_\mu(\delta)$ as a function of~$\delta$, because solving~\eqref{eq:3eqns} becomes problematic only when $V_\mu(\delta)$ is close to its maximal value. Some further details concerning $\delta^{**}$ in the particular case of problem~\eqref{max_prob} are given in Section~\ref{sec:delta_starstar}.

In the important case of $\mu=\p$ we have $L\equiv 1$. Incorporating the latter into the constant
$b$ reduces the number of equations by one. Indeed, we then may consider $L^*=aU_+$ with $U=U(b,\bs c)=X+b+\bs c^\top\bs Y$ and the equations
\[\e (U_+)=1/a,\quad\var (U_+)=\delta/a^2,\quad\cov(U_+,\bs Y)=\bs 0^\top.\]
Hence we need to find constants $b,c_i\in\R,i=1,\ldots,d-1$, such that $U>0$ with positive probability and
\[\cov(U_+,\bs Y)=\bs 0^\top,\qquad \delta=\var(U_+)/\e^2(U_+).\]
If a solution is found then $V_\p(\delta)=\e(U_+X)/\e(U_+)$ and the corresponding Radon--Nikodym derivative is $L^*=U_+/\e(U_+)$. Moreover, this suggests a parametric approach to plot $V_\p(\delta),\delta>0$: (1) fix $b$ in some range and try to find $c_i, i=1,\dots, d-1,$ such that $\cov(U_+,\bs Y)=\bs 0^\top$ and $U>0$ with positive probability, (2) plot $(\delta,V_\p(\delta))$ for various values of~$b$. 

Solving the above systems of non-linear equations may not be trivial, but it can be done numerically for a moderate dimension~$d$. 
Note that evaluation of the left hand sides in~\eqref{eq:3eqns} for a given choice of constants $a,b,c_i$ requires integration with respect to the joint distribution of $(X,\bs Y^\top)$. Thus each evaluation is costly even for small~$d$.
In the following section we provide an upper bound for the optimal value of a simple explicit form that does not require solving any equation. Moreover, in our applications we observed that this bound often coincides with the optimal value or is very close to it.

\section{Robust bounds of a simple form}\label{sec:rob_bounds}
Throughout this section we assume that $\Em X^2,\Em Y_i^2,\Em L^2<\infty$, and recall that $\Sigma_\mu(X,\bs Y)$ denotes the $\mu$-covariance matrix of the vector~$(X,\bs Y^\top)$. The following result provides a robust bound of a simple form on $\e' X$ under the moment constraints $\e' \bs Y=\e\bs Y$ and in the neighborhood defined by $D_\mu(\p',\p)\leq \delta$. In the following we call it a \emph{robust square-root bound}.
\begin{theorem}\label{thm:main}
If $\Sigma_\mu(X,\bs Y)$ is invertible then the optimal value of~\eqref{eq:opt} for $\delta > 0$ satisfies
\[V_\mu(\delta)\leq \e X+\sqrt{\delta\frac{\det\{\Sigma_\mu(X,\bs Y)\}}{\det\{\Sigma_\mu(\bs Y)\}}},\]
which {\emph{holds with equality}} if and only if
\begin{align}\label{eq:condition}X-\Em X-\cov_\mu(X,\bs Y)\Sigma_\mu(\bs Y)^{-1}(\bs Y-\Em\bs Y)+\sqrt{\frac{\det\{\Sigma_\mu(X,\bs Y)\}}{\delta\det\{\Sigma_\mu(\bs Y)\}}}L\geq 0\quad  \mu\text{-a.s.}
\end{align}
\end{theorem}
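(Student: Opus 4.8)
The plan is to exploit the relaxation already recorded in Remark~\ref{rem:nopositivity}: dropping the non-negativity constraint $L'\geq 0$ only enlarges the feasible set, so the optimal value of the relaxed problem is an upper bound for $V_\mu(\delta)$. First I would substitute $h=L'-L$ and rewrite every constraint in terms of $h$. Using $\Em L=1$ and the definitions of the covariance matrices, the constraints collapse to $\Em h=0$, $\Em(h\bs Y)=\bs 0$, and $\Em h^2\leq\delta$, while the objective $\Em(L'X)=\Em(LX)+\Em(hX)$ becomes $\e X+\Em(hX)$. Thus the relaxed problem is exactly the Hilbert-space problem of maximizing the linear functional $\langle h,X\rangle_\mu$ over all $h\in L^2_\mu$ lying in the ball of radius $\sqrt\delta$ and orthogonal to the subspace $\mathcal V=\mathrm{span}\{1,Y_1,\dots,Y_{d-1}\}$ (the orthogonality to $1$ and to each $Y_i$ being precisely $\Em h=0$ and $\Em(h\bs Y)=\bs 0$).

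By Cauchy--Schwarz the maximum over this set is attained at $h^*=\sqrt\delta\,R/\|R\|_\mu$, where $R$ is the orthogonal projection of $X$ onto $\mathcal V^\perp$, and equals $\sqrt\delta\,\|R\|_\mu$. The next step is the linear-algebra identification of $R$ and of its norm. Since $\Sigma_\mu(X,\bs Y)$ is invertible and positive semidefinite it is in fact positive definite, so its principal submatrix $\Sigma_\mu(\bs Y)$ is invertible and the usual least-squares computation gives $R=X-\Em X-\cov_\mu(X,\bs Y)\Sigma_\mu(\bs Y)^{-1}(\bs Y-\Em\bs Y)$ --- precisely the residual appearing in \eqref{eq:condition} --- together with $\|R\|_\mu^2=\var_\mu(X)-\cov_\mu(X,\bs Y)\Sigma_\mu(\bs Y)^{-1}\cov_\mu(\bs Y,X)$. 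Recognizing the right-hand side as the Schur complement of $\Sigma_\mu(\bs Y)$ in $\Sigma_\mu(X,\bs Y)$, the block-determinant formula yields $\|R\|_\mu^2=\det\Sigma_\mu(X,\bs Y)/\det\Sigma_\mu(\bs Y)>0$. Substituting into $\e X+\sqrt\delta\,\|R\|_\mu$ produces the claimed upper bound.

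For the equality characterization I would argue that $V_\mu(\delta)$ coincides with the relaxed value if and only if the relaxed maximizer $\hat L=L+h^*$ is itself feasible for the original problem, that is $\hat L\geq 0$ $\mu$-a.s. Writing $\hat L=L+\sqrt\delta\,R/\|R\|_\mu$ and clearing the positive factor $\sqrt\delta/\|R\|_\mu$ shows that $\hat L\geq 0$ is exactly condition \eqref{eq:condition}, after substituting $\|R\|_\mu=\sqrt{\det\Sigma_\mu(X,\bs Y)/\det\Sigma_\mu(\bs Y)}$. The easy direction is then immediate: if $\hat L\geq 0$ then $\hat L$ is feasible for~\eqref{eq:opt}, so $V_\mu(\delta)\geq\Em(\hat L X)$, which equals the relaxed value, forcing equality.

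The converse is the part requiring the most care, and it is the main obstacle. Here I would first establish that the original supremum is attained: its feasible set is a norm-closed, bounded, convex subset of $L^2_\mu$, hence weakly compact, while $L'\mapsto\Em(L'X)=\langle L',X\rangle_\mu$ is weakly continuous since $X\in L^2_\mu$. If equality held, an attaining $L^*\geq 0$ would also be optimal for the relaxed problem; because $R\neq 0$ the relaxed maximizer is unique, so $L^*=\hat L$ and therefore $\hat L\geq 0$. The delicate points are thus not the computation but this uniqueness-plus-attainment argument and the bookkeeping that matches $\hat L\geq 0$ to the exact normalization in \eqref{eq:condition}; the result also meshes with case~(i) of Theorem~\ref{thm:opt}, corresponding to the regime where the positive part is inactive.
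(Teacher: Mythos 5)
Your argument is correct and lands on the same algebraic core as the paper --- the regression residual, the Schur complement, and the block-determinant identity --- but it gets there by a genuinely different route. The paper derives everything from case (i) of Theorem~\ref{thm:opt}: it posits $L^*=(aU+L)_+$ with $U=X+b+\bs c^\top\bs Y$, assumes $aU+L\geq 0$ $\mu$-a.s., and solves the system~\eqref{eq:3eqns} for the multipliers $a$, $b$, $\bs c$, falling back on Remark~\ref{rem:nopositivity} (itself an offshoot of the Lagrangian analysis) to obtain the inequality in general. You bypass the duality machinery altogether: after the substitution $h=L'-L$ the relaxed problem becomes the maximization of a continuous linear functional over a ball intersected with the orthogonal complement of $\mathrm{span}\{1,Y_1,\dots,Y_{d-1}\}$ in $L^2_\mu$, and Cauchy--Schwarz applied to the projection residual delivers the optimal value, the unique maximizer, and a proof of Remark~\ref{rem:nopositivity} in one stroke. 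Your handling of the ``only if'' direction --- attainment of the supremum in~\eqref{eq:opt} via weak compactness of the closed, bounded, convex feasible set, followed by uniqueness of the relaxed maximizer forcing $L^*=\hat L\geq 0$ --- is in fact more explicit than the paper's, which leaves that direction implicit in the if-and-only-if characterization of Theorem~\ref{thm:opt}. What the paper's route buys is coherence with Section~\ref{sec:computation}, where the same constants $a,b,\bs c$ are the objects solved for numerically; what yours buys is a self-contained, elementary Hilbert-space argument that never invokes the strong duality theorem.
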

\begin{proof}
The covariance matrix $\Sigma_\mu(X,\bs Y)$ is positive definite and so must be $\Sigma_\mu(\bs Y)$, showing that~$\det\{\Sigma_\mu(X,\bs Y)\},\det\{\Sigma_\mu(\bs Y)\}>0$.
Consider case (i) of Theorem~\ref{thm:opt} and note that we may rescale the constants $b,\bs c$ so that $L^*=(aU+L)_+$, where $U=X+b+\bs c^\top\bs Y$. 

First, we assume that $aU+L\geq 0$ $\mu$-a.s. Then according to~\eqref{eq:3eqns} we have
\[\Em U=0,\qquad \var_\mu(U)=\delta/a^2,\qquad \cov_\mu(\bs Y,U)=\bs 0.\]
Denoting $\bs\sigma=\cov_\mu(\bs Y,X)$ the latter reads as 
\[\bs 0=\cov_\mu(\bs Y,U)=\cov_\mu(\bs Y,X+b+\bs c^\top\bs Y)=\bs \sigma+\Sigma_\mu(\bs Y)\bs c\]
showing that $\bs c^\top=-\bs\sigma^\top\Sigma_\mu(\bs Y)^{-1}$.
Similarly, 
\begin{align}
\var_\mu(U)&=\cov_\mu(U,X+b+\bs c^\top\bs Y)=\cov_\mu(U,X)=\var_\mu(X)+\bs c^\top\bs\sigma\nonumber\\
&=\var_\mu(X)-\bs \sigma^\top\Sigma_\mu(\bs Y)^{-1}\bs\sigma\label{eq:schur}
\end{align}
which is $\det\{\Sigma_\mu(X,\bs Y)\}/\det\{\Sigma_\mu(\bs Y)\}$ according to the well-known formula for the determinant of a block matrix, see~\cite{schur} or~\cite[Eq.\ (1.3)]{schur_in_stat}; the expression in~\eqref{eq:schur} is called the Schur complement of $\Sigma_\mu(\bs Y)$ with respect to~$\Sigma_\mu(X,\bs Y)$. 
Hence we find that 
\[a=\sqrt{\delta\det\{\Sigma_\mu(\bs Y)\}/\det\{\Sigma_\mu(X,\bs Y)\}},\qquad
b=-\Em X+\bs\sigma^\top\Sigma_\mu(\bs Y)^{-1}\Em\bs Y.\]
According to~\eqref{eq:schur} we finally get
\[V_\mu(\delta)=\e^*X=\Em\{(aU+L)X\}=\e X+a\cov_\mu(U,X)=\e X+a\frac{\det\{\Sigma_\mu(X,\bs Y)\}}{ \det\{\Sigma_\mu(\bs Y)\}},\]
which readily yields $V_\mu(\delta)$ under the assumption of non-negativity of~$aU+L$. But in any case we have an upper bound according to Remark~\ref{rem:nopositivity}.
Finally, we have an exact expression for $V_\mu(\delta)$ if
\[0\leq U+L/a=X-\Em X+\bs\sigma^\top\Sigma_\mu(\bs Y)^{-1}\Em\bs Y-\bs\sigma^\top\Sigma_\mu(\bs Y)^{-1}\bs Y+\sqrt{\frac{\det\{\Sigma_\mu(X,\bs Y)\}}{\delta\det\{\Sigma_\mu(\bs Y)\}}}L\]
holds $\mu$-a.s., which completes the proof.
\end{proof}

The assumption that $\Sigma_\mu(X,\bs Y)$ is invertible is not a restriction, because otherwise either some moment constraints are redundant and so can be removed, or $X$ can be expressed as a linear combination of $Y_i$ and so $\e X$ is determined by the moment constraints.
Moreover, there is a link to the control variates method for variance reduction, where $\det\{\Sigma_\mu(X,\bs Y)\}/\det\{\Sigma_\mu(\bs Y)\}$ corresponds to the minimal possible variance of $X+\bs c^\top\bs Y$ for an arbitrary vector~$\bs c$, see~\cite[Sec.\ V.2]{simulation}.  Furthermore, if $(X,\bs Y^\top)$ is jointly normal then the above fraction of the determinants is the variance of $X$ conditional on $\bs Y=\bs y$.
Finally, in some applications it may be important to understand when the optimizing measure is equivalent to the dominating measure~$\mu$, i.e.,~$L^*$ is strictly positive $\mu$-a.s. It is easy to see that this happens if and only if~\eqref{eq:condition} holds with strict inequality. 
As before here we assume that $\det\{\Sigma_\mu(X,\bs Y)\}\neq 0$, which additionally ensures that the case (ii) of Theorem~\ref{thm:opt} cannot be used to construct a strictly positive~$L^*$.

The condition~\eqref{eq:condition} implies that there exists $\delta^*\in[0,\infty]$ such that the robust square-root bound is exact for all $\delta\in[0,\delta^*]$, but otherwise it is conservative.
Because of the form of the square-root bound we observe that necessarily $\delta^*\leq \delta^{**}$, where the latter is defined in Remark~\ref{rem:case_ii}. Figure~\ref{fig:bounds} illustrates these quantities, the optimal value and the square-root bound. Note also that the optimal value~$V_\mu(\delta)$ must be a concave function of~$\delta$, which is easily seen from~\eqref{eq:opt}.
\begin{figure}[h!]
\includegraphics[width=0.3\textwidth]{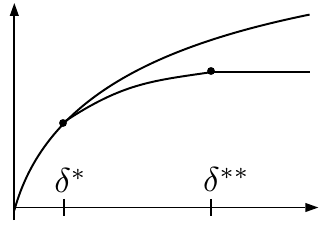}
\caption{The optimal value $V_\mu(\delta)$ (lower curve) and the square-root bound (upper curve) as functions of $\delta$.}
\label{fig:bounds}
\end{figure}

The corresponding minimization problem is solved by considering $-X$ instead of $X$ in~\eqref{eq:opt}, in which case we define $L_*,\delta_*$ and $\delta_{**}$ analogously to $L^*,\delta^*$ and $\delta^{**}$. In particular, we have the following lower square-root bound.
\begin{corollary}\label{cor:main}
Under the assumptions of Theorem~\ref{thm:main} it holds for $\delta> 0$ that 
\[\inf_{\p'}\,\{\e'X:D_\mu(\p',\p)\leq\delta,\e'\bs Y=\e\bs Y\}\geq \e X-\sqrt{\delta\frac{\det\{\Sigma_\mu(X,\bs Y)\}}{\det\{\Sigma_\mu(\bs Y)\}}}\]
with equality if and only if
\begin{align}\label{eq:condition_min}X-\Em X-cov_\mu(X,\bs Y)\Sigma_\mu(\bs Y)^{-1}(\bs Y-\Em\bs Y)-\sqrt{\frac{\det\{\Sigma_\mu(X,\bs Y)\}}{\delta\det\{\Sigma_\mu(\bs Y)\}}}L\leq 0\qquad  \mu\text{-a.s.}
\end{align}
\end{corollary}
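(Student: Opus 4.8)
The plan is to reduce the minimization problem to the maximization problem already solved in Theorem~\ref{thm:main} via the substitution $X\mapsto -X$. First I would note that
\[\inf_{\p'}\,\{\e'X:D_\mu(\p',\p)\leq\delta,\e'\bs Y=\e\bs Y\}=-\sup_{\p'}\,\{\e'(-X):D_\mu(\p',\p)\leq\delta,\e'\bs Y=\e\bs Y\}=-\tilde V_\mu(\delta),\]
where $\tilde V_\mu(\delta)$ denotes the optimal value of~\eqref{eq:opt} with $X$ replaced by $-X$. Since $-X$ still satisfies the integrability hypothesis $\Em X^2<\infty$ and the moment constraints are untouched, Theorem~\ref{thm:main} applies verbatim to $-X$ and supplies both the upper bound on $\tilde V_\mu(\delta)$ and its equality condition.

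The only point requiring care is how the covariance quantities transform under $X\mapsto -X$. Writing $D=\mathrm{diag}(-1,1,\dots,1)$, the covariance matrix of $(-X,\bs Y^\top)$ is the congruence $\Sigma_\mu(-X,\bs Y)=D\,\Sigma_\mu(X,\bs Y)\,D$, since flipping the sign of $X$ negates every off-diagonal entry in its row and column while leaving $\var_\mu(X)$ and the $\bs Y$-block intact. Because $\det D=-1$, this gives $\det\{\Sigma_\mu(-X,\bs Y)\}=(\det D)^2\det\{\Sigma_\mu(X,\bs Y)\}=\det\{\Sigma_\mu(X,\bs Y)\}$, so the square-root term is literally unchanged and $\Sigma_\mu(-X,\bs Y)$ inherits invertibility from $\Sigma_\mu(X,\bs Y)$. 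Likewise $\cov_\mu(-X,\bs Y)=-\cov_\mu(X,\bs Y)$ and $\Em(-X)=-\Em X$.

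Substituting these identities into the bound of Theorem~\ref{thm:main} yields
\[\tilde V_\mu(\delta)\leq -\e X+\sqrt{\delta\frac{\det\{\Sigma_\mu(X,\bs Y)\}}{\det\{\Sigma_\mu(\bs Y)\}}},\]
and negating gives exactly the asserted lower bound. For the equality criterion I would write~\eqref{eq:condition} for $-X$, insert $\cov_\mu(-X,\bs Y)=-\cov_\mu(X,\bs Y)$ together with the unchanged determinant ratio, and then multiply the whole inequality by $-1$; the reversal sends the $\geq 0$ of~\eqref{eq:condition} to the $\leq 0$ of~\eqref{eq:condition_min}, which is precisely the claimed condition.

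There is essentially no hard step: the entire content is that the determinant of the full covariance matrix is invariant under the sign flip of a single coordinate, which makes the square-root bound symmetric about $\e X$. The only thing to double-check is the sign bookkeeping in the equality condition, making sure that negating both the objective and the extremal density lands on~\eqref{eq:condition_min} rather than on a reversed or strict variant.
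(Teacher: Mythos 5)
Your proposal is correct and follows exactly the paper's own argument: apply Theorem~\ref{thm:main} to $-X$, observe that $\det\{\Sigma_\mu(X,\bs Y)\}$ is unchanged under the sign flip, and negate the equality condition~\eqref{eq:condition} to obtain~\eqref{eq:condition_min}. Your congruence argument $\Sigma_\mu(-X,\bs Y)=D\,\Sigma_\mu(X,\bs Y)\,D$ is a clean way to justify the determinant invariance that the paper merely asserts.
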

\begin{proof}
Consider $-X$ in place of $X$ in Theorem~\ref{thm:main} and note that $\det\{\Sigma_\mu(X,\bs Y)\}$ stays the same. The condition for equality immediately follows from~\eqref{eq:condition}.
\end{proof}

\begin{remark}\label{rem:mu=p}
When $\mu=\p$, with $\Sigma = \Sigma_\p$ it holds that
\begin{align}\label{eq:delta_star}
\delta^*=\frac{\det\{\Sigma(X,\bs Y)\}}{{b^*}^2\det\{\Sigma(\bs Y)\}}\1{b^*>0}+\infty\1{b^*\leq 0},\qquad 
\delta_*=\frac{\det\{\Sigma(X,\bs Y)\}}{{b_*}^2\det\{\Sigma(\bs Y)\}}\1{b_*<0}+\infty\1{b_*\geq 0},
\end{align}
where $b^*$ and $b_*$ are the essential supremum and the essential infimum of 
\[\cov(X,\bs Y)\Sigma(\bs Y)^{-1}(\bs Y-\e\bs Y)-(X-\e X),\] respectively.
In particular, if $|X|$ and all $|Y_i|$ are bounded a.s.\ then necessarily $\delta^*,\delta_*>0$. 
\end{remark}

In the case of no moment constraints, $d=1$, the square-root bounds on $\e' X$ in the respective ball of measures are given by
\[\e X\pm\sqrt{\delta\var_\mu(X)}.\]
In the case of one constraint, $d=2$, we obtain the square-root bounds
\begin{equation}\label{eq:sqrt_1}
\e X\pm\sqrt{\delta\var_\mu(X)(1-\corr^2_\mu(X,Y))},
\end{equation}
and the corresponding $\delta^*,\delta_*$ can be computed from~\eqref{eq:condition} and~\eqref{eq:condition_min}. Notice that the bounds become tighter in presence of a constraint when $X$ and $Y$ are correlated. 

It is important to note that the exact robust bounds become tighter or stay the same when an extra moment constraint is added, which follows immediately from~\eqref{eq:opt}. The same is true for the square-root bounds. This either follows from the proof of Theorem~\ref{thm:main} and~Remark~\ref{rem:nopositivity}, or from the following analysis based on block matrix algebra. Letting $\bs Y^+=(Y_1,\ldots,Y_d)^\top$, we need to show that 
\[\det\{\Sigma_\mu(X,\bs Y^+)\}/\det\{\Sigma_\mu(\bs Y^+)\}\leq \det\{\Sigma_\mu(X,\bs Y)\}/\det\{\Sigma_\mu(\bs Y)\}.\]
This inequality follows by rewriting it using the Schur complements as in~\eqref{eq:schur} and applying the block matrix inversion formula~\cite[Thm.\ 2.7]{schur_in_stat}. By doing so we find that this inequality is strict unless 
\[\cov_\mu(X,Y_{d})=\cov_\mu(X,\bs Y)\Sigma_\mu(\bs Y)^{-1}\cov_\mu(\bs Y,Y_{d}).\]
In other words, this condition corresponds to the case when the extra moment constraint on~$Y_{d}$ does not improve the square-root bounds, assuming that the enlarged covariance matrix $\Sigma_\mu(X,\bs Y^+)$ is invertible.


\section{Bounds on Pickands' dependence function}\label{sec:pickands_bounds}

In this section we apply the bounds from the previous sections to
assess the model misspecification error in multivariate extremes with the focus on tail probabilities in~\eqref{eq:approx_main}.
For illustration, we consider the bivariate case and note that the extension
to higher dimensions is analogous. Possible computational challenges will be
discussed in Section~\ref{sec:var} presenting another application of the robust approach to multivariate extremes.

Recall that we exclusively address misspecification of the spectral distribution. In the bivariate
case, this distribution is defined on the simplex $\mathbb S^1$, and it is thus effectively one-dimensional.
In the following we assume that the corresponding random variable $Y=Y_1\in[0,1]$ $\mu$-a.s.
Alternatively to~\eqref{max_prob}, we may directly consider the robust bound on the Pickands' dependence function defined in~\eqref{eq:pickands}:
\begin{equation}\label{eq:optA}
\sup_{\p'}\left\{2\e'\left\{(1-z)Y\vee z(1-Y)\right\}:D_\mu(\p',\p)\leq \delta,\e Y=1/2\right\},
\end{equation}
i.e., we take $X=X(z)=2\{(1-z)Y\vee z(1-Y)\}$ for a fixed $z\in(0,1)$. The respective optimal value $A^*(z;\delta)$ provides the asymptotic robust upper bound on the tail probability
\[ \p(Z_1>tz_1\text{ or }Z_2>tz_2)\lesssim t^{-1}\left(\frac{1}{z_1}+\frac{1}{z_2}\right)A^*\left(\frac{z_1}{z_1+z_2};\delta\right),\qquad t\to \infty\] The lower bound is obtained similarly using the optimal value $A_*(z;\delta)$ of the corresponding minimization problem.



In addition to the exact robust bounds we will use the corresponding square-root bounds, and also bounds in the model class for comparison. More precisely, we consider the following:
\begin{itemize}
\item[(a)] Robust bounds $A^*(z;\delta)$ and $A_*(z;\delta)$ given by Theorem~\ref{thm:opt}, which can be computed as explained in Section~\ref{sec:computation}. Details on the identification of $\delta^{**}$ and $\delta_{**}$ are postponed to the Appendix~\ref{sec:delta_starstar}, because this is not essential for applications.
\item[(b)] Robust square-root bounds $\widehat A^*(z;\delta)\geq A^*(z;\delta)$ and $\widehat A_*(z;\delta)\leq A_*(z;\delta)$ given by~\eqref{eq:sqrt_1}. These are conservative bounds that are easy to compute. Moreover, they are exact when $\delta\leq\delta^*$ and $\delta\leq \delta_*$ for upper and lower bounds, respectively.
\item[(c)] Exact bounds in the model class that are not robust under model misspecification. That is, we impose the restriction that $Y$ under $\p'$ belongs to the chosen model class. These bounds are easy to compute for, e.g., one-parameter families, but otherwise it can be a hard problem. This paper addresses model misspecification issues and so the bounds in the model class will be given only for comparison.
\end{itemize}


\begin{remark}
  The bounds in (a) and (b) on Pickands' function directly provide robust bounds on the \emph{extremal coefficient} $\theta = 2A(1/2) \in[1,2]$, a commonly used summary statistic for dependence in multivariate and spatial extreme value statistics \citep{sch2003}.
\end{remark}

Regarding the optimization problem~\eqref{eq:optA} it is convenient to switch to the induced distributions of~$Y$, see also Section~\ref{sec:measure_space}. Thus we assume that $\Omega=[0,1]$ and $\mathcal F$ is the respective Borel $\sigma$-algebra, and that $Y(\omega)=\omega$. In Section~\ref{sec:divergence} we claimed that the choice of the dominating measure $\mu$ reflects our uncertainty about the measure~$\p$. Throughout this paper, however, our main choice is $\mu=\p$ leading to the R\'enyi divergence of order~2. For the purpose of illustration we also use $\mu=$ Leb$[0,1]$ assigning uniform weights; see~\eqref{eq:unif}. In the following we discuss some further simplifications of the general theory in these two particular cases.


If $\mu=\p$, then computing the robust bounds requires solving a system of two non-linear equations, see Section~\ref{sec:computation} for details.
On the contrary, the square-root bounds \eqref{eq:sqrt_1} are always explicit, and we only need to compute $\e X,\var(X)$ and $\corr(X,Y)$.
Moreover, Remark~\ref{rem:mu=p} provides simple expressions for $\delta^*$ and $\delta_*$ in terms of $b^*$ and~$b_*$. Importantly, the latter two can be given explicitly under a minor assumption that $0,1,z$ are in the support of the distribution of $Y$: 
\begin{align*}
b^*&=\e X-\rho/2+(-2z)\vee (\rho z-2z(1-z))\vee (\rho-2(1-z)),\\
b_*&=\e X-\rho/2+(-2z)\wedge(\rho-2(1-z)),
\end{align*}
where $\rho=\cov(X,Y)/\var(Y)$.
Indeed,
\[b^*=\e X-\rho/2+{\rm ess\, sup}(\rho Y-X),\qquad b_*=\e X-\rho/2+{\rm ess\, inf (\rho Y-X)}\]
according to Remark~\ref{rem:mu=p}, and $\rho Y-X$ achieves its maximum in one of the points $Y=0,Y=z$ or $Y=1$, and its minimum in $Y=0$ or $Y=1$.
Further comments concerning $\delta^{**}$ and $\delta_{**}$ will be given in Section~\ref{sec:delta_starstar}.

If $\mu=\Leb$, then for the exact robust bounds we need to solve a system of three non-linear equations. Concerning the square-root bounds, we observe from~\eqref{eq:sqrt_1} that their width is determined by the dominating measure $\mu$, whereas $L$ affects the center $\e X$ and the values of $\delta^*,\delta_*$ only; see~\eqref{eq:condition} and~\eqref{eq:condition_min}.
A simple calculation based on~\eqref{eq:sqrt_1} yields the following square-root bounds for an arbitrary density~$L$:
\begin{equation}\label{eq:sqrt_leb}\e X\pm \sqrt{\delta\frac{4}{3}z^3(1-z)^3}.\end{equation}
Furthermore, for any $\delta$ one can provide a lower bound on the density $L(w)$ so that the square-root upper and lower bounds are exact. In particular, one can show that if $L$ approaches 0 at one of the ends of the interval $[0,1]$ then $\delta_*=0$, that is, the lower square-root bound is never exact.
Similarly, if $L$ approaches~0 at~$z$ then~$\delta^*=0$.



\subsection{Illustration of the bounds}\label{sec:illustration}
In the beginning of this section we provided a list of three bounds on the Pickands' dependence function~$A(z)$: (a) exact robust bounds, (b) conservative square-root bounds and (c) bounds in the model class.
Let us illustrate these bounds for different divergence levels~$\delta$ with an example of a H\"usler--Reiss spectral distribution; see Appendix~\ref{appendix_distributions} for several common parametric families of spectral distributions. 
The H\"usler--Reiss distribution has a single parameter $\lambda\in(0,\infty)$ and we fix it to~$\lambda=0.6$. Furthermore, we consider $z=0.4$ and use two dominating measures: $\mu=\p$ and $\mu=$~Leb. The upper panels of Figure~\ref{fig:HR_densities} show the bounds as functions of divergence~$\delta$. The middle and lower panels depict the H\"usler--Reiss density $h_{0.6}(\omega)$ as well as the optimizing densities corresponding to the upper and lower bounds for a particular choice of~$\delta=0.4$. In order to make comparisons easier, the densities with respect to the Lebesgue measure are depicted in both cases, and so for $\mu=\p$ we plot $L^*(\omega)h_{0.6}(\omega)$ and $L_*(\omega)h_{0.6}(\omega)$ rather than $L^*(\omega)$ and $L_*(\omega)$. 
Finally, there is no density corresponding to the square-root bound when $\delta$ is larger than $\delta^*$ or $\delta_*$ for the upper and lower bound, respectively. Nevertheless, there always exists the corresponding pseudo-density which is not necessarily non-negative; see Remark~\ref{rem:nopositivity}. These pseudo-densities are also included in Figure~\ref{fig:HR_densities}.

\begin{figure}[h!]
\begin{center}
\begin{tabular}{cc}
$\mu=\p$&$\mu=$~Leb\\
\includegraphics[width=0.47\textwidth,trim= 10mm 10mm 10mm 0mm]{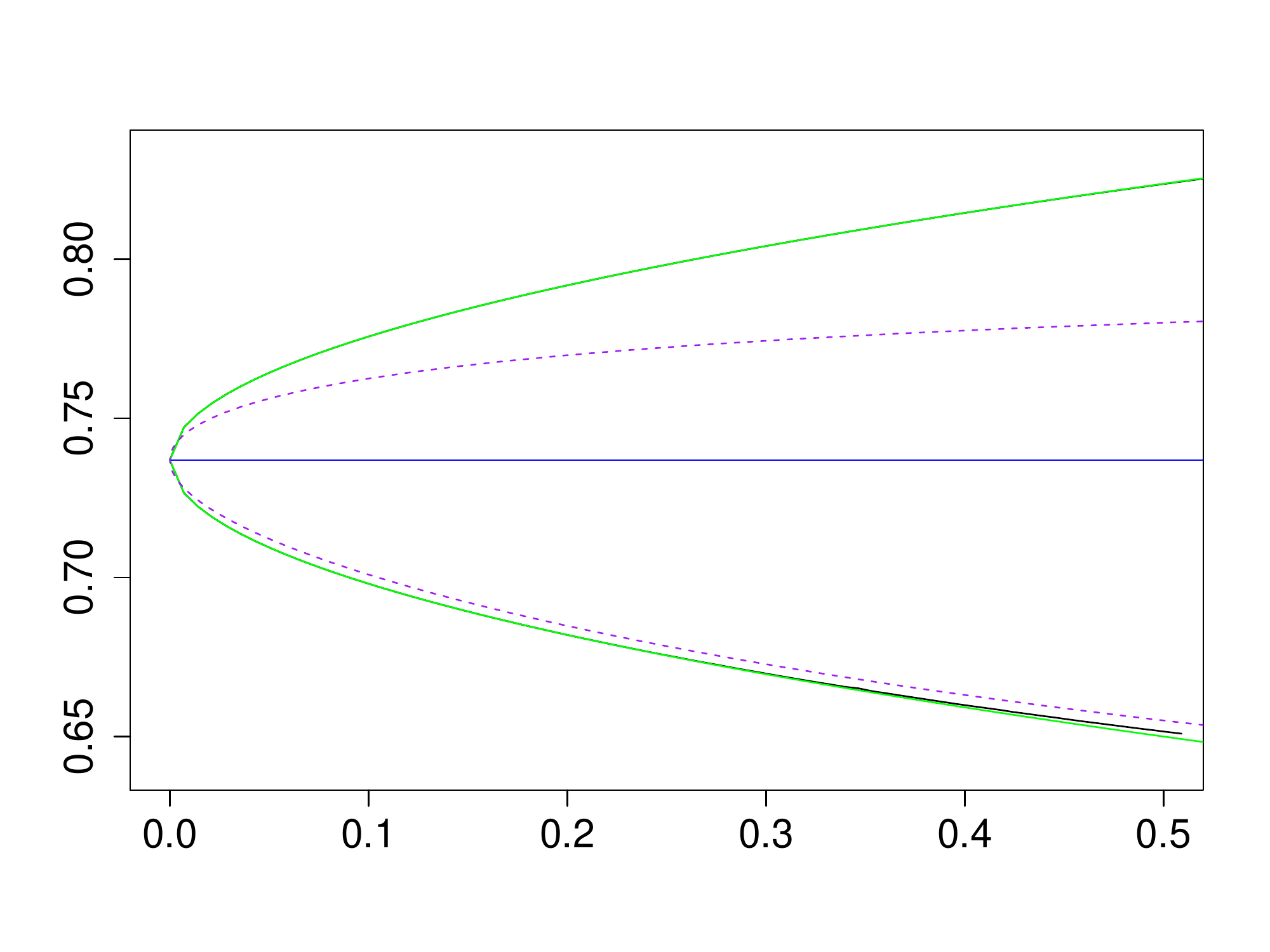}
&\includegraphics[width=0.47\textwidth,trim= 10mm 10mm 10mm 0mm]{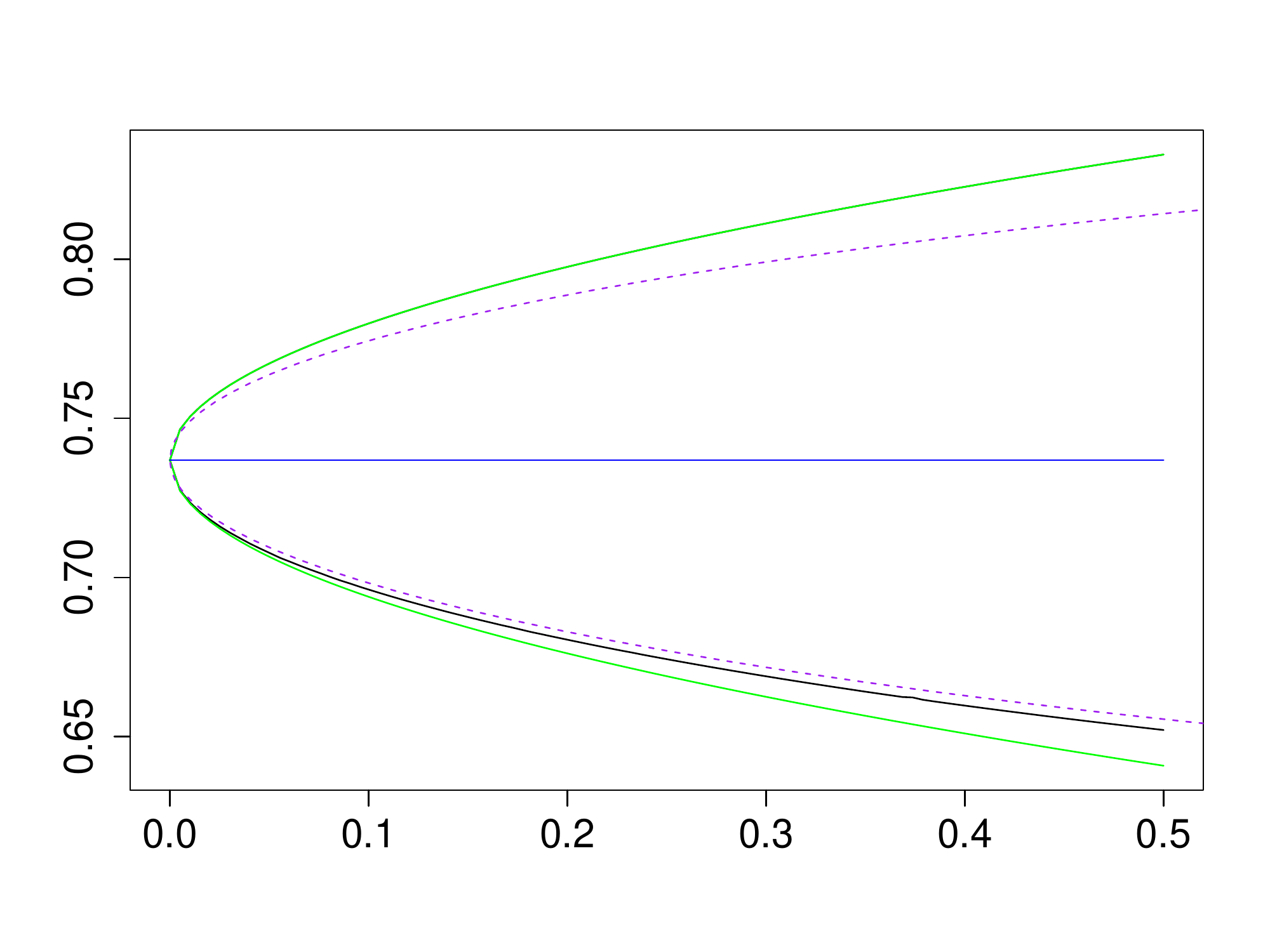}\\
\includegraphics[width=0.47\textwidth,trim= 10mm 10mm 10mm 10mm]{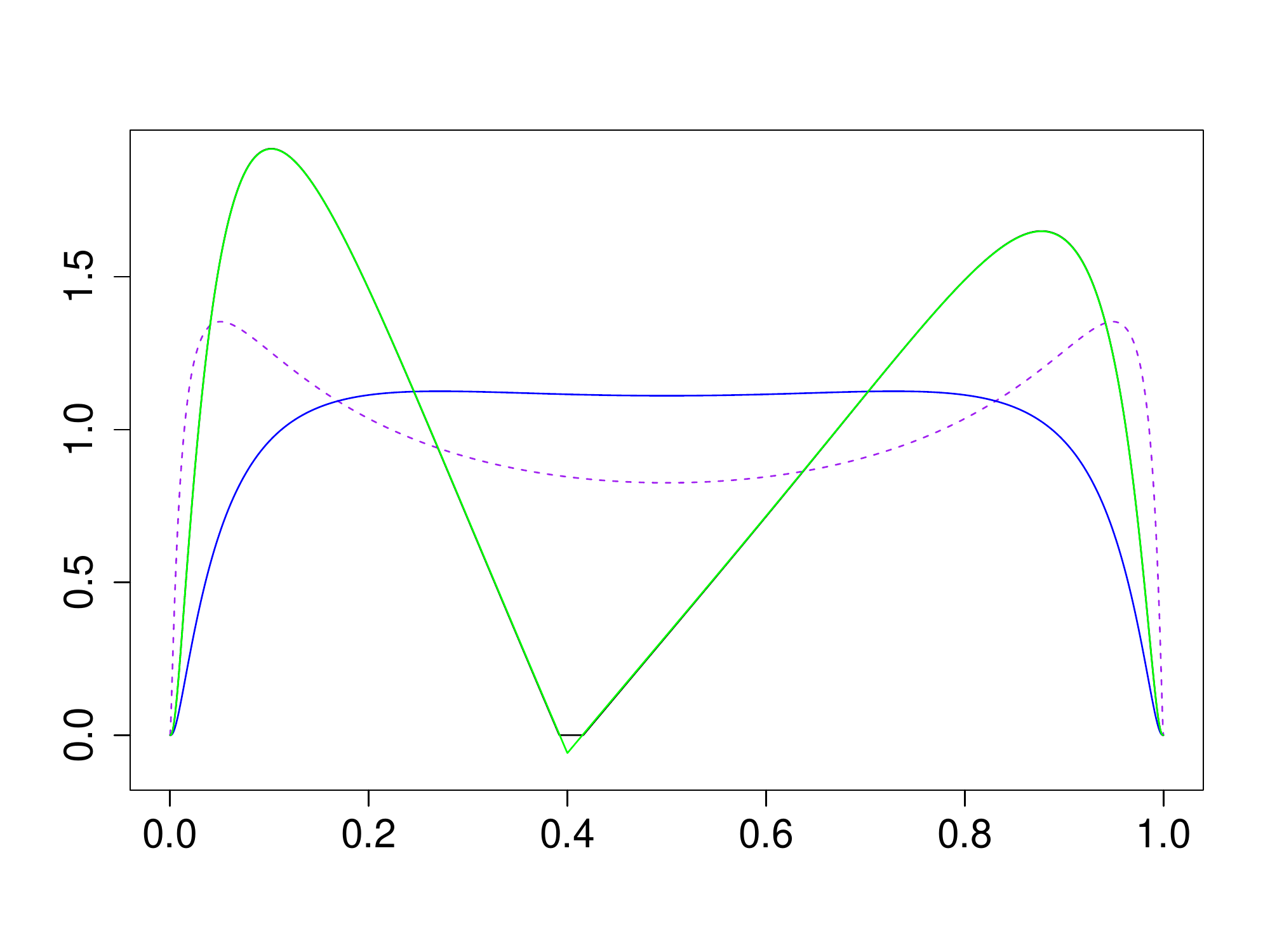}
&\includegraphics[width=0.47\textwidth,trim= 10mm 10mm 10mm 10mm]{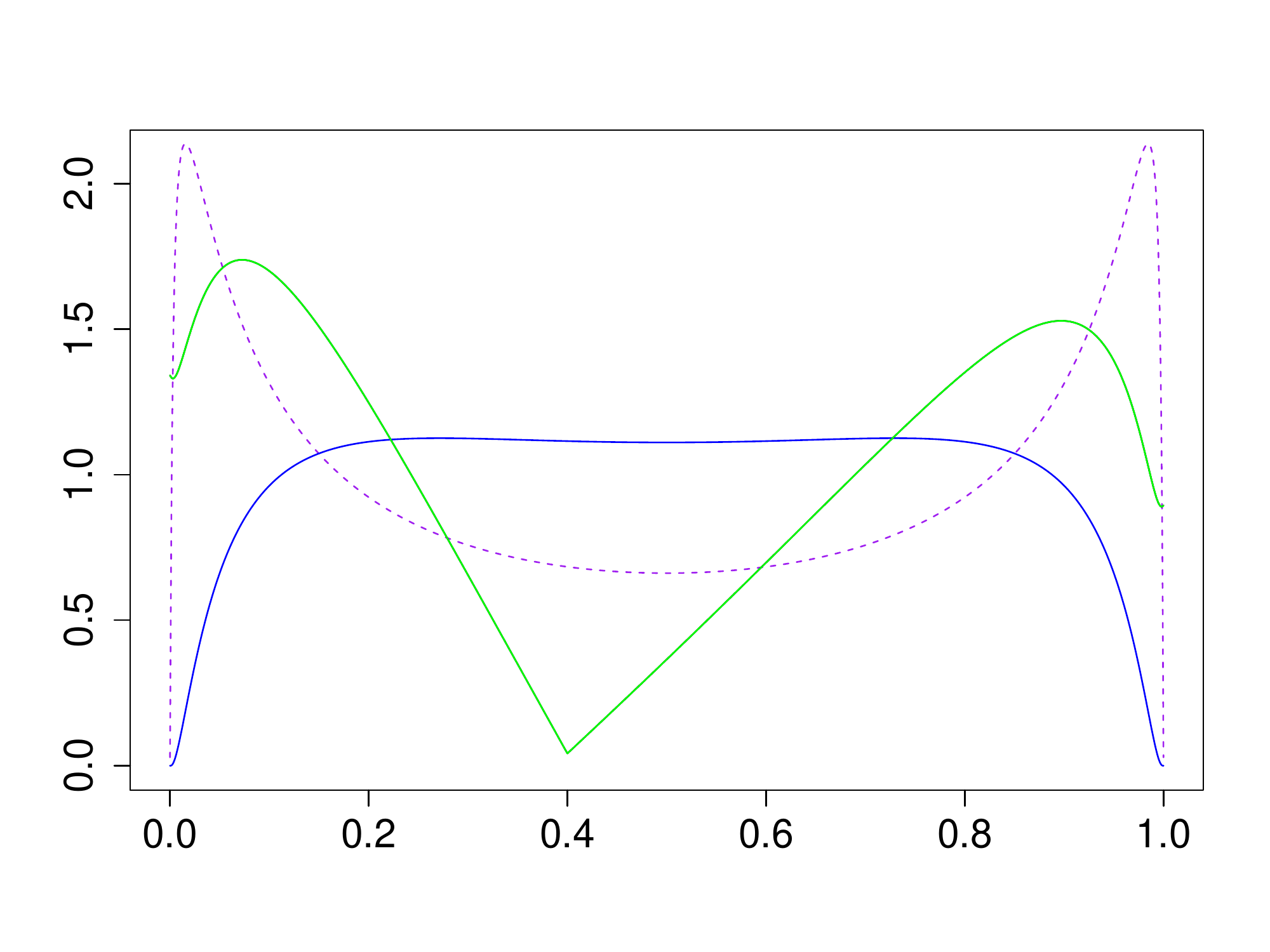}\\
\includegraphics[width=0.47\textwidth,trim= 10mm 10mm 10mm 10mm]{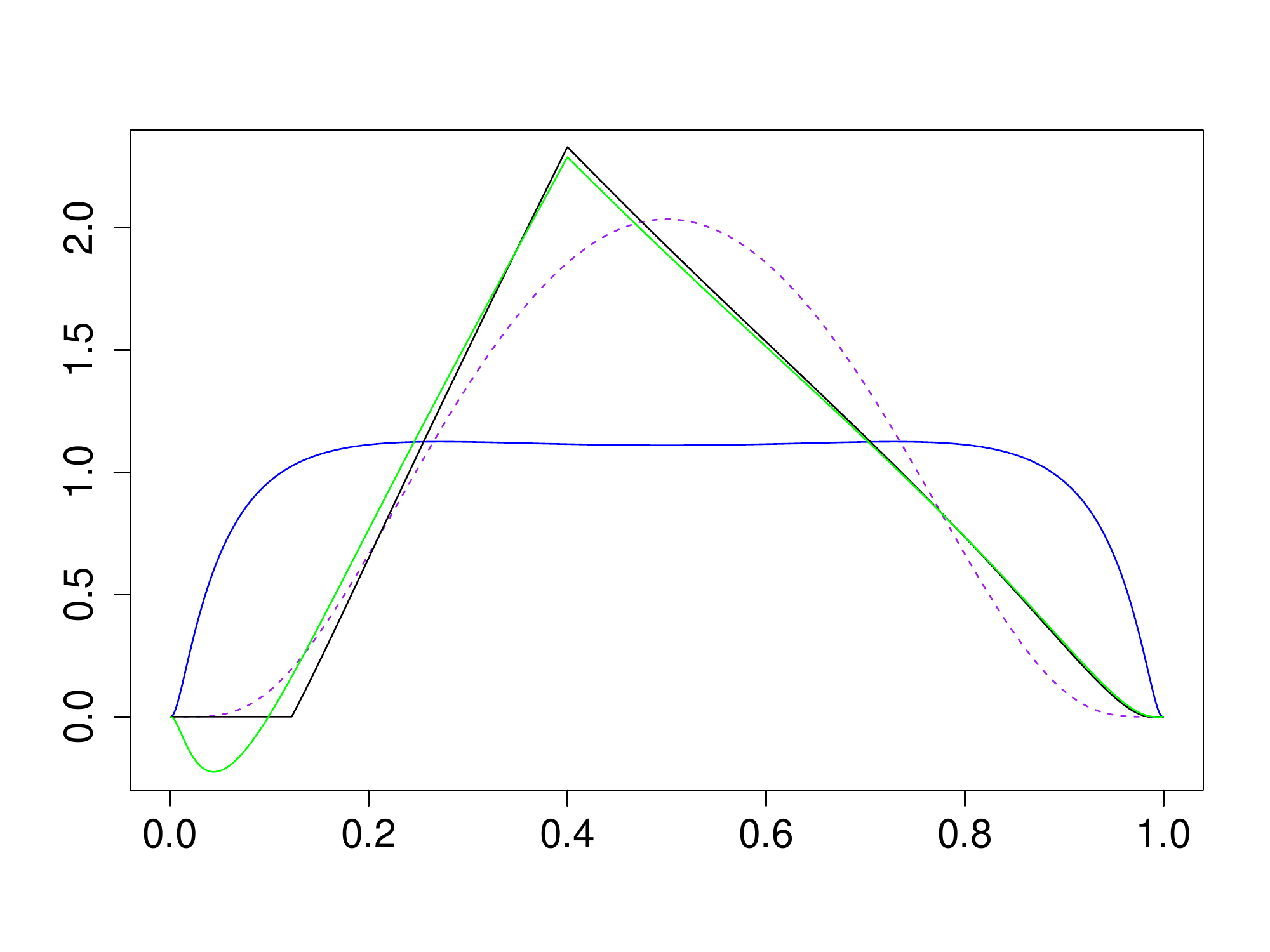}
&\includegraphics[width=0.47\textwidth,trim= 10mm 10mm 10mm 10mm]{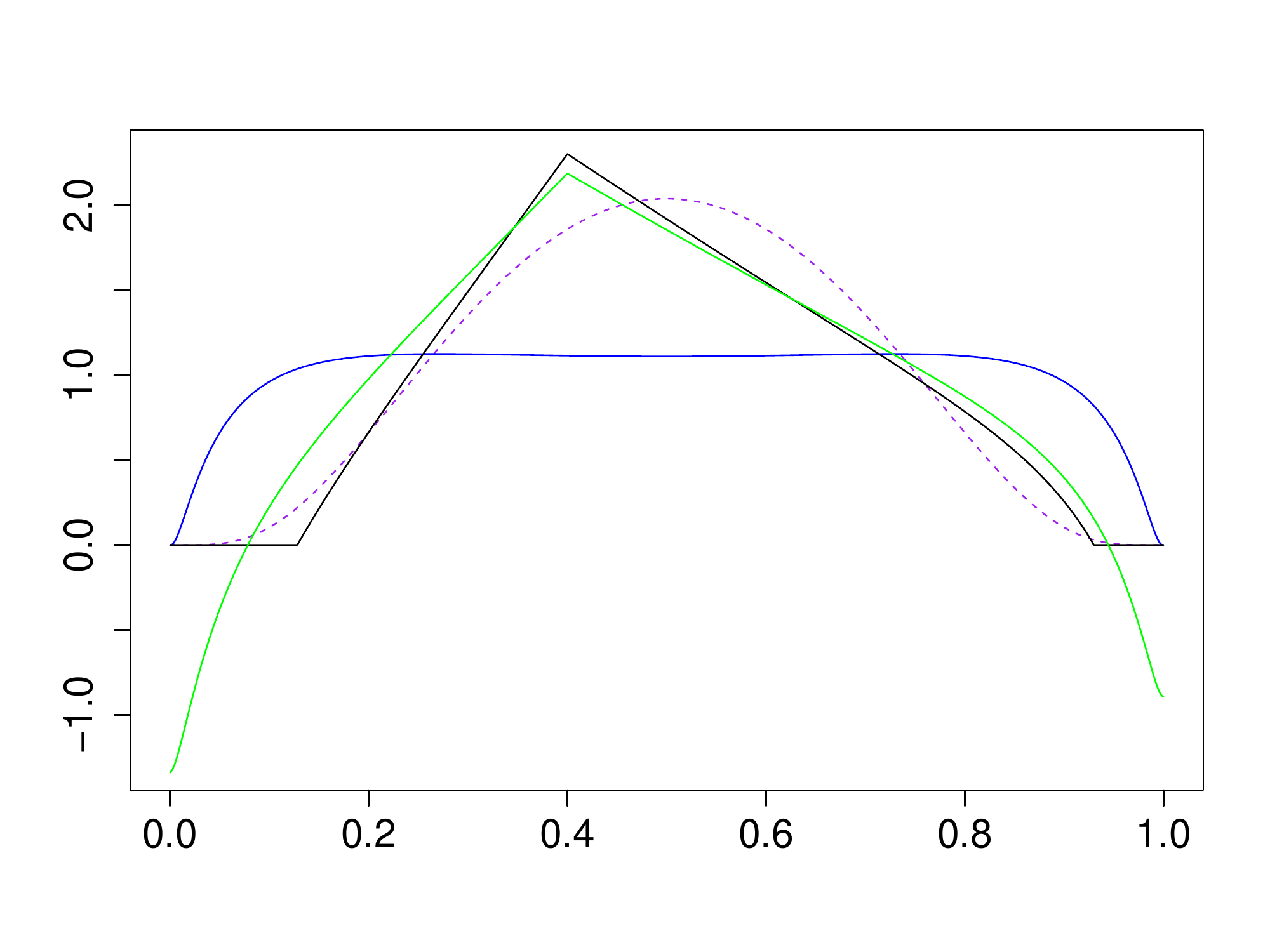}\\
\end{tabular}
\caption{Upper panels contain the value of $A(z)$ for $\p$ being H\"usler--Reiss with $\lambda=0.6$ and $z=0.4$ (blue), exact robust bounds (black), the square-root bounds (green) and bounds in the model class (dashed purple) as functions of $\delta$. Middle and lower panels show the H\"usler--Reiss density and the optimizing (Lebesgue-) densities for $\delta=0.4$ corresponding to upper and lower bounds, respectively. The pseudo-densities corresponding to square-root bounds are given in green.} 
\label{fig:HR_densities}
\end{center}
\end{figure}
Additionally, we find $\delta^*=0.36,\delta_*=0.14$ when $\mu=\p$, and $\delta^*=0.43,\delta_*=0$ when $\mu=$~Leb, respectively. 
Thus in the case of $\mu=$ Leb the upper square-root bound is exact for the chosen level~$\delta=0.4$ and so $L^*$ and the corresponding pseudo-density coincide.
In the other cases, exact and square-root bounds do not coincide, but it can be seen that the square-root bound is still a very good approximation of the exact robust bound even when $\delta$ is much larger than $\delta^*$ or $\delta_*$. 
Furthermore, the upper bounds in the model class are obtained for $\lambda=0.737$ and $\lambda=0.844$ according to $\mu=\p$ and $\mu=$~Leb, and the lower bounds for $\lambda=0.367$ and $\lambda=0.366$. 

Let us make some final observations concerning the optimizing densities. Firstly, when maximizing $A(z)$ the probability mass is shifted from around $z$ towards 0 and~1.
Conversely, when minimizing $A(z)$ the mass is shifted towards~$z$. Secondly, when $\mu=\p$ the density corresponding to the exact upper bound approaches 0 at both ends, and it does not do so when $\mu=$~Leb. The reason is that the chosen H\"usler--Reiss spectral density decays faster than any power at~0 and at~1, and so R\'enyi divergence as defined in~\eqref{eq:divergence} is finite only if the density of $\p'$ decays fast at 0 and at~1. This issue will arise again in Section~\ref{sec:exp} describing our experiments.

\subsection{Experiments}\label{sec:exp}

In this section we show how the robust bounds are able to capture correctly 
the uncertainty due to model misspecification in a statistical estimation problem.
They remain reliable in situations where classical confidence bounds would underestimate
the statistical error.

As an application of our results we consider the estimation of
tail probabilities of the bivariate, regularly varying random vector $\bs Z$; see Section \ref{sec:reg_var}.
Throughout this section we assume that $\bs Z$ follows an asymmetric logistic 
distribution with dependence parameter $a\in(0,1)$ and asymmetry parameters 
$b_1, b_2\in [0,1]$ as defined in Appendix~\ref{sec:AL}, so that the limiting spectral measure~$H=$ AL$(a,b_1,b_2)=\p_{\textup{true}}$ in~\eqref{eq:limH} has the density~\eqref{eq:AL_dens}.
We conduct several experiments
that illustrate the use of the robust bounds in practical applications.
All our experiments are carried out according to the following scheme.
\begin{enumerate}[(a)]
\item Simulate $n$ data $\bs Z^{(1)}, \dots, \bs Z^{(n)}$ from a bivariate asymmetric logistic distribution using the $R$-package~\cite{ste2002}. 
\item Transform the samples to polar coordinates as in Section~\ref{sec:reg_var}, and choose $r>0$ such that
there are $k<n$ of the radii exceeding the threshold~$r$. According to~\eqref{eq:limH}, the corresponding angles,
say $Y^{(1)},\dots, Y^{(k)}$, are approximate realizations of the spectral distribution.
The choice
of the threshold $r$ is a trade-off between the sample size $k$ and the approximation error.
\item Choose a parametric family for the spectral distribution and fit it to the observations $Y^{(1)}, \dots, Y^{(k)}$, using maximum likelihood estimation. The parametric family can either
  be the correct asymmetric logistic model, or a misspecified model such as the H\"usler--Reiss
  or the extremal-$t$ described in Appendices~\ref{sec:HR} and \ref{sec:ET}, respectively. This model of the spectral distribution defines our probability measure~$\p$.
\item Plot the Pickands' dependence functions $A_{\textup{true}}$ and $A$
  corresponding to the true asymmetric logistic model~$\p_{\textup{true}}$
  and the estimate~$\p$ from (c), respectively; see~\eqref{eq:pickands}.
\item Estimate the divergence of the data from the fitted model $\delta=D_\mu(\p_{\textup{data}},\p)$ using a naive approach: estimate the density (and point masses) from the given $k$ observations and plug it into~\eqref{eq:divergence_ref} together with the model density from (c). Alternative methods for divergence estimation can be found in, e.g.,~\cite{poczos_divergence}. For comparison, we also compute the true divergence $\delta_{\textup{true}}=D_\mu(\p_{\textup{true}},\p)$ of the true underlying asymmetric logistic distribution from the fitted model. 
\item Compute the robust square-root bounds $\widehat A^*(z;\delta)$ and $\widehat A_*(z;\delta)$ for Pickands' function $A$ using~$\delta$ computed in (e). The exact robust bounds, which are considerably harder to compute, are very close to the square-root bounds and we omit them for clarity of the plots.
\item Compute the classical $95\%$-confidence bounds for the Pickands' function by non-parametric bootstrap. 
This is based on 300 estimates of the model parameters as in (c), each for a resample of the data with replacement. We plot these bounds around $A$.
\end{enumerate}

\begin{remark}\label{rem:AL_sim}
Let us remark that instead of simulating data from the asymmetric logistic distribution we could have used any bivariate distribution from its max-domain of attraction, because we rely on a limiting result in (b) to approximate realizations of~$Y$.
Importantly, it is the limiting asymmetric logistic distribution and the corresponding spectral distribution of~$Y$ which are of main interest since they provide a way to extrapolate tail probabilities out of the sample.
\end{remark}

\begin{table}[h!]
\begin{tabular}{l l l l l l l}
$\#$ & $\p_{\textup{true}}$ &$n$ &Fitted model~$\p$ &$\mu$ &$\delta$ &$\delta_{\textup{true}}$\\
\hline
1  &AL$(0.4,0.7,1)$ &20000 &ET$(0.65,1.21)$ &$\p$ &0.34 &0.35\\
2  &AL$(0.5,1,1)$ &20000 &HR$(0.61)$ &Leb &0.05 &0.06\\
3  &AL$(0.5,1,1)$ &2000 &ET$(0.88,3.37)$ &$\p$ &0.05 &0.05\\
4  &AL$(0.5,0.9,0.5)$ &20000 &AL$(0.49,0.85,0.54)$ &$\p$ &0.02 &0.02\\
\end{tabular}
\caption{Details on the four experiments. AL: asymmetric logistic, HR: H\"usler--Reiss, ET: extremal-$t$.}\label{table:exp}
\end{table}


The basic information on the four experiments is given in Table~\ref{table:exp}, and the corresponding plots are given in Figure~\ref{fig:plots}. In all experiments we use $k=500$ exceedances.
In experiment $\#1$ we fit a symmetric extremal-$t$ model to an asymmetric logistic model, where both allow for point masses at 0 and 1. The bootstrap confidence bounds are quite tight 
in this example, but they do not contain the true model on most of the domain.
This shows that these classical bounds are overly confident if the fitted model
is misspecified. The robust bounds, one the other hand, are wider and they do contain
the true model everywhere. We note that the square-root bounds may go outside the triangle
of admissible Pickands' functions, see first row in Figure~\ref{fig:plots}. This, however, can be easily fixed by simply restricting the bounds to stay inside the triangle.

In experiment $\#2$ we fit a symmetric H\"usler--Reiss model to a symmetric logistic model ($b_1 = b_2 = 1$) with no point masses. Here the dominating measure is $\mu=$~Leb for the reasons that we discuss below. Even though both models are symmetric, the H\"usler--Reiss family is not flexible enough to well-approximate the generating logistic distribution. This is underlined by the
fact that the Pickands' function does not stay inside the bootstrap bounds, but only inside the wider robust bounds.

In the first two experiments we simulate $n=20000$ data points, corresponding roughly to 55 years of daily observations. We choose $r$ to be the $97.5\%$ quantile of all radii, and use for fitting the
$k=500$ observations whose radii exceed $r$. In experiment $\#3$ 
we only have $n=2000$ data points and still use $k=500$, corresponding to the $75\%$
quantile for $r$. Comparing the histograms $\#2$ and $\#3$ in Figure~\ref{fig:plots},
we note that in the latter case there are less observations close to 0 and 1. This 
illustrates that the data used for fitting comes from a pre-limit distribution.
The $\delta$ we are estimating therefore represents the divergence of the data, that is,
the pre-limit distribution, from the fitted model. This number can be considerably
larger or smaller than $\delta_{\textup{true}}$, the divergence of the generating logistic
distribution from the fitted model. 
In this case the robust theory still works well, but the estimated~$\delta$ becomes unreliable. In experiment $\#3$ we chose a run with similar divergences $\delta$ and~$\delta_{\textup{true}}$.

Experiment $\#4$ shows the case of fitting the well-specified asymmetric logistic family
to the data. As expected, both the bootstrap and the robust bounds contain the true model.
It is interesting to observe that both types of bounds almost coincide, meaning
that the robust version is not overly conservative in the well-specified case.

\begin{figure}[p]
\begin{center}
\begin{tabular}{cc}
\includegraphics[width=0.47\textwidth,trim= 10mm 10mm 10mm 10mm]{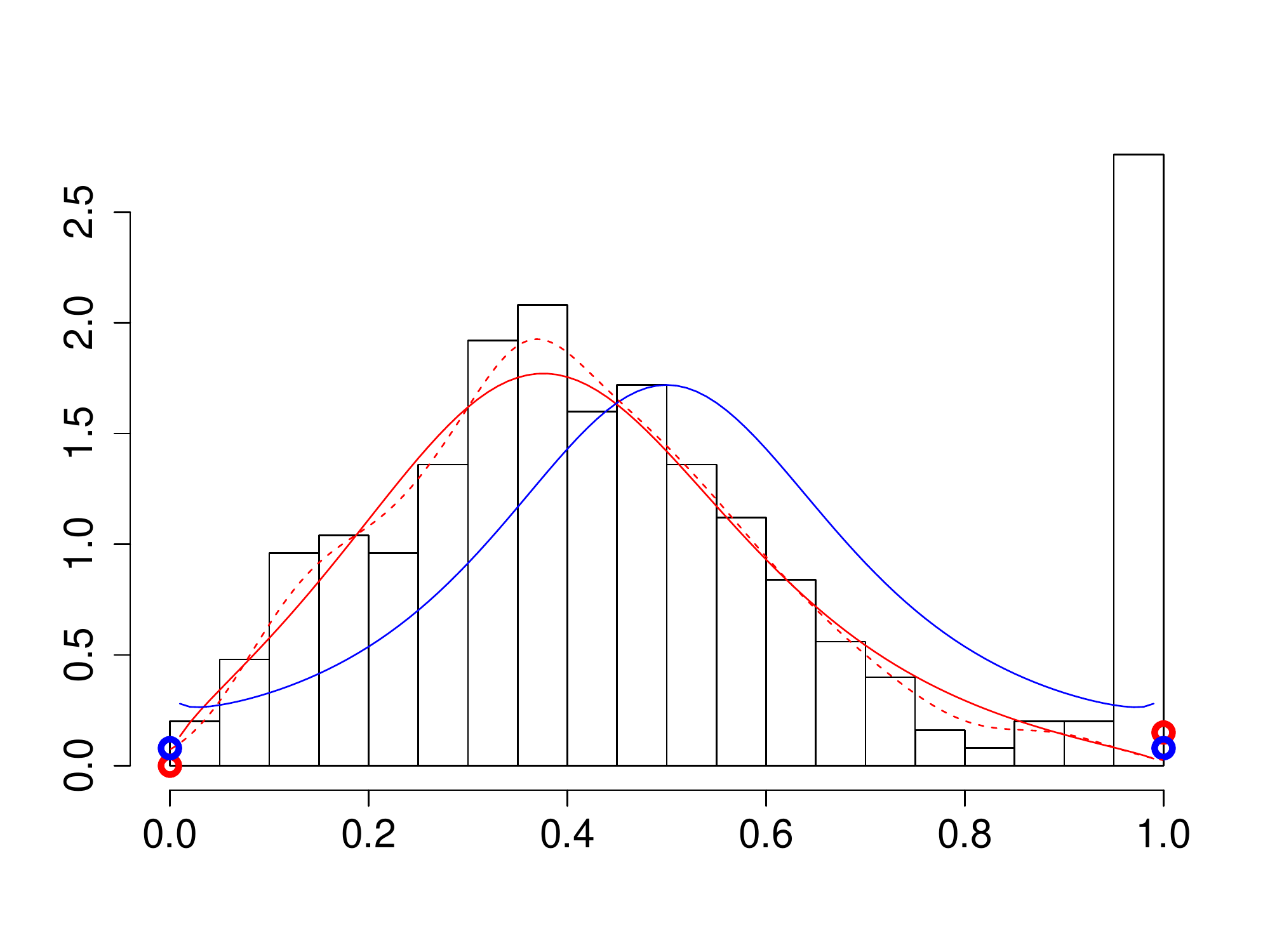}
&\includegraphics[width=0.47\textwidth,trim= 10mm 10mm 10mm 10mm]{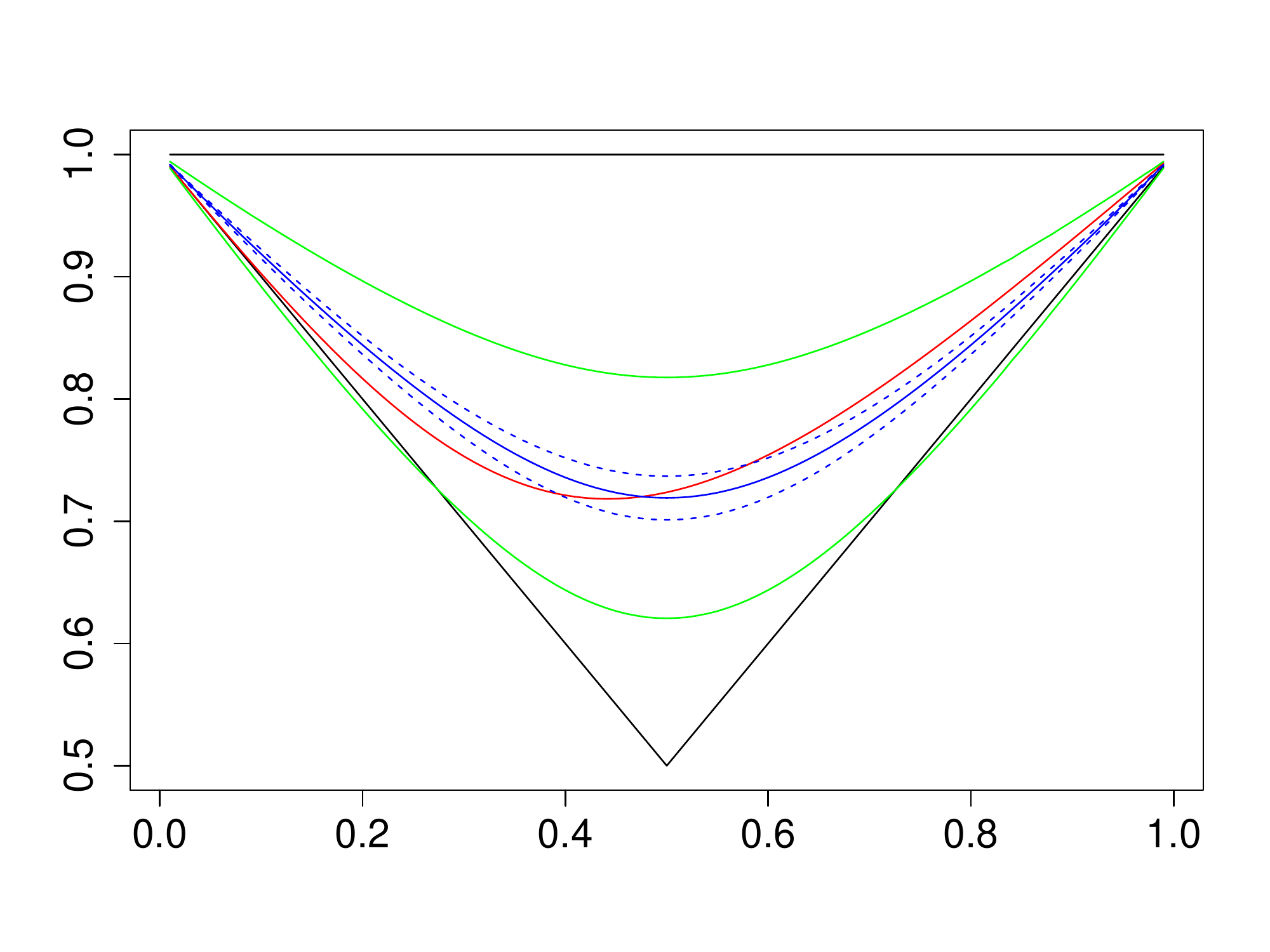}\\
\includegraphics[width=0.47\textwidth,trim= 10mm 10mm 10mm 10mm]{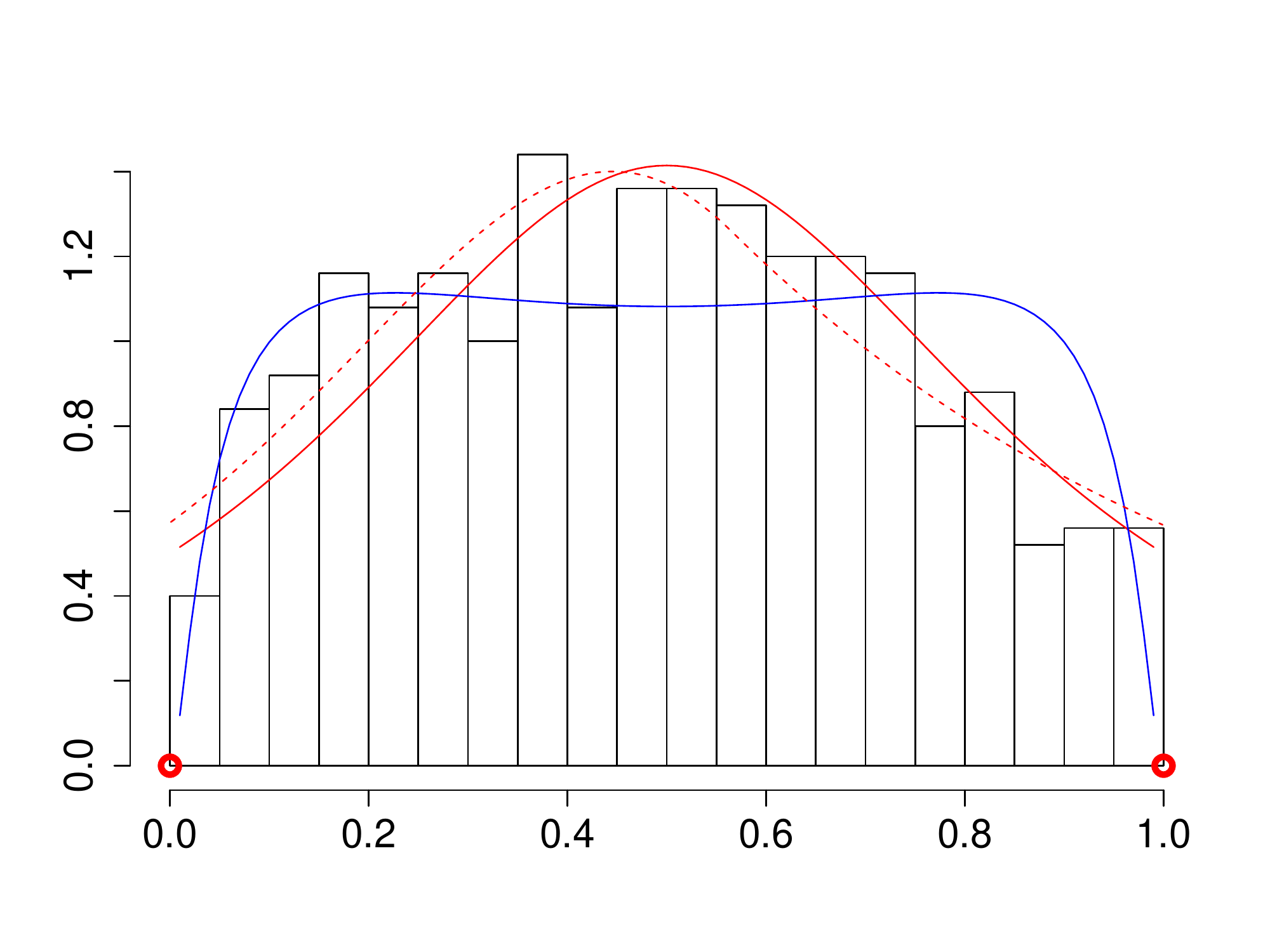}
&\includegraphics[width=0.47\textwidth,trim= 10mm 10mm 10mm 10mm]{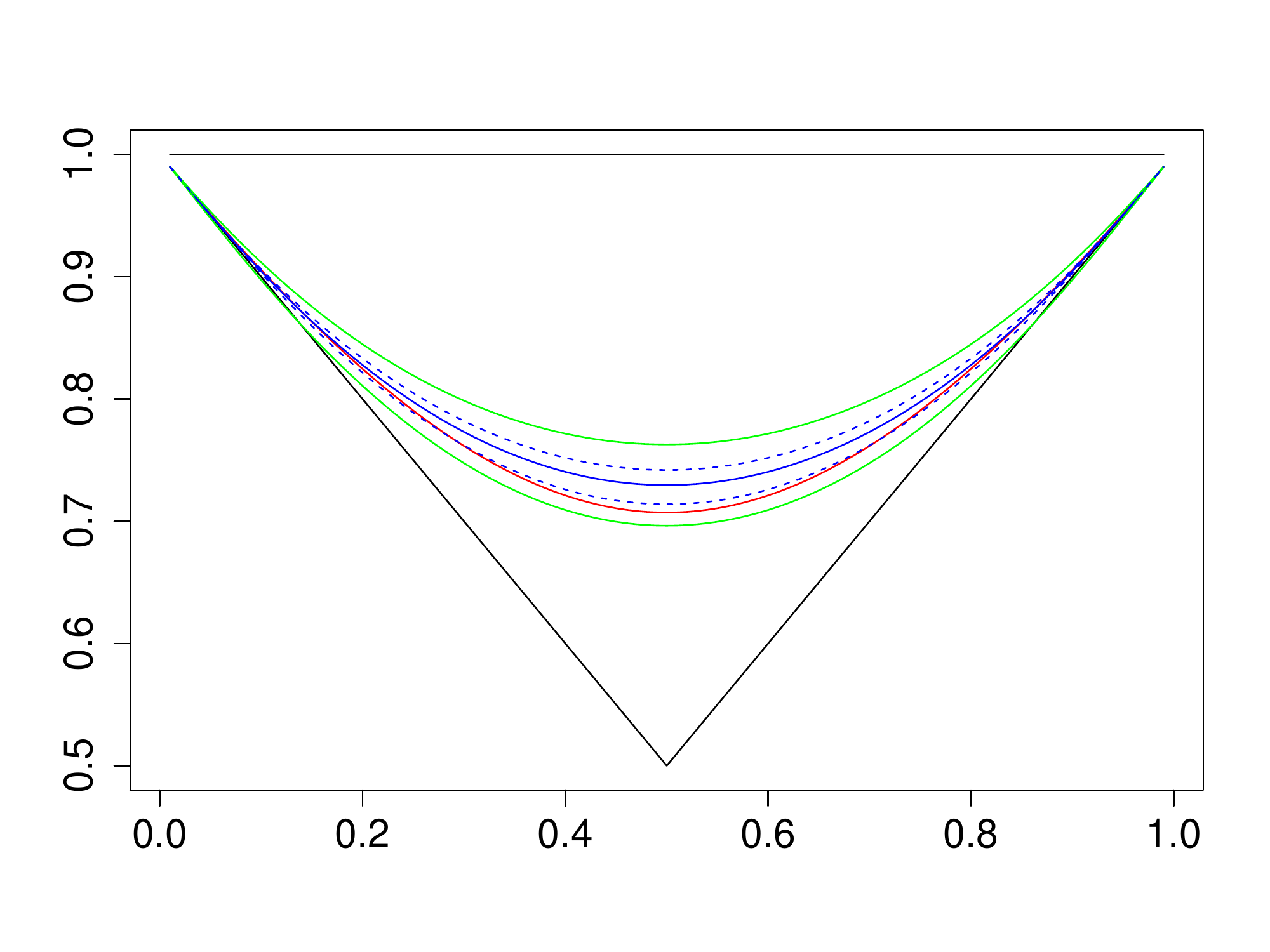}\\
\includegraphics[width=0.47\textwidth,trim= 10mm 10mm 10mm 10mm]{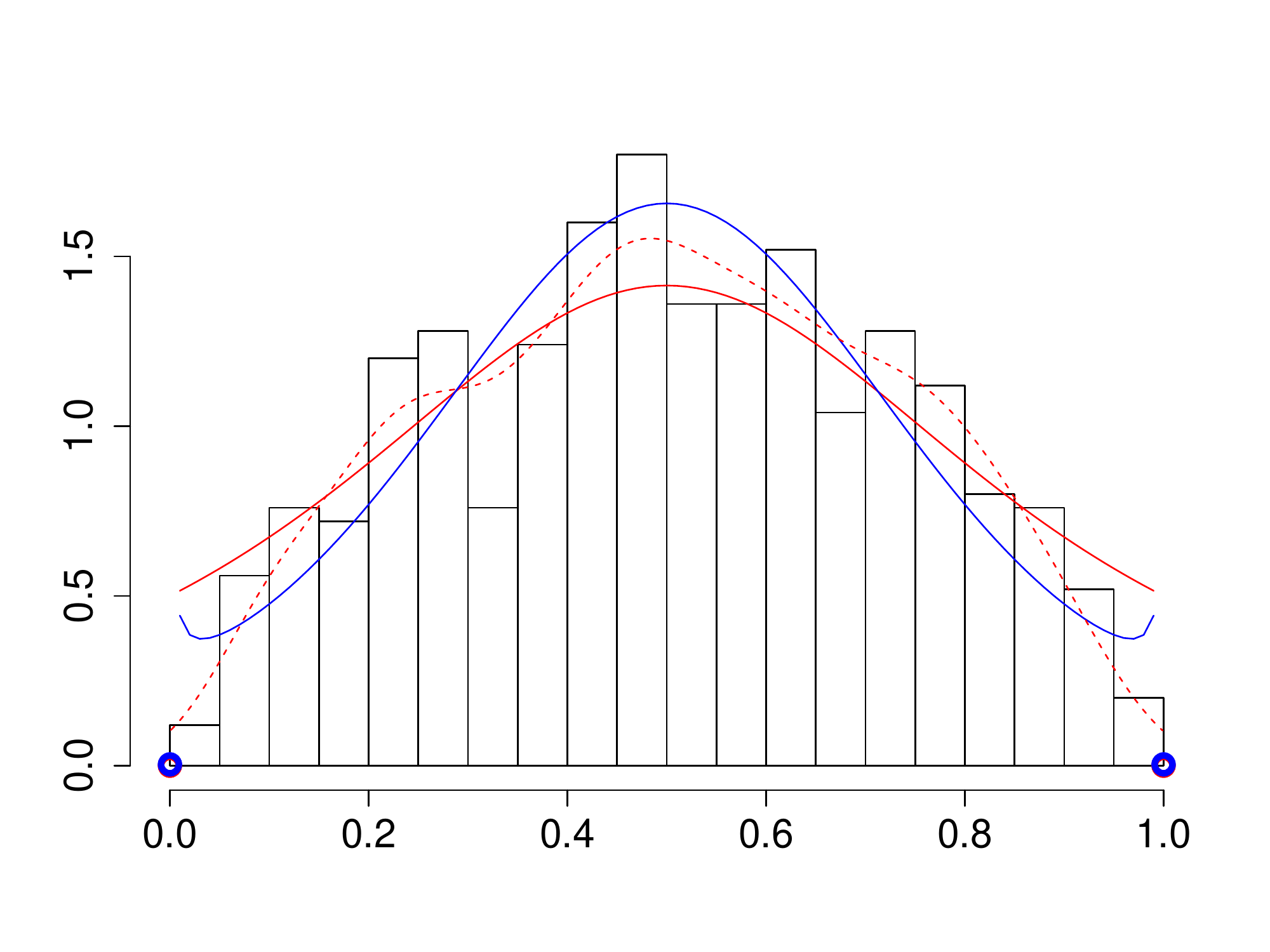}
&\includegraphics[width=0.47\textwidth,trim= 10mm 10mm 10mm 10mm]{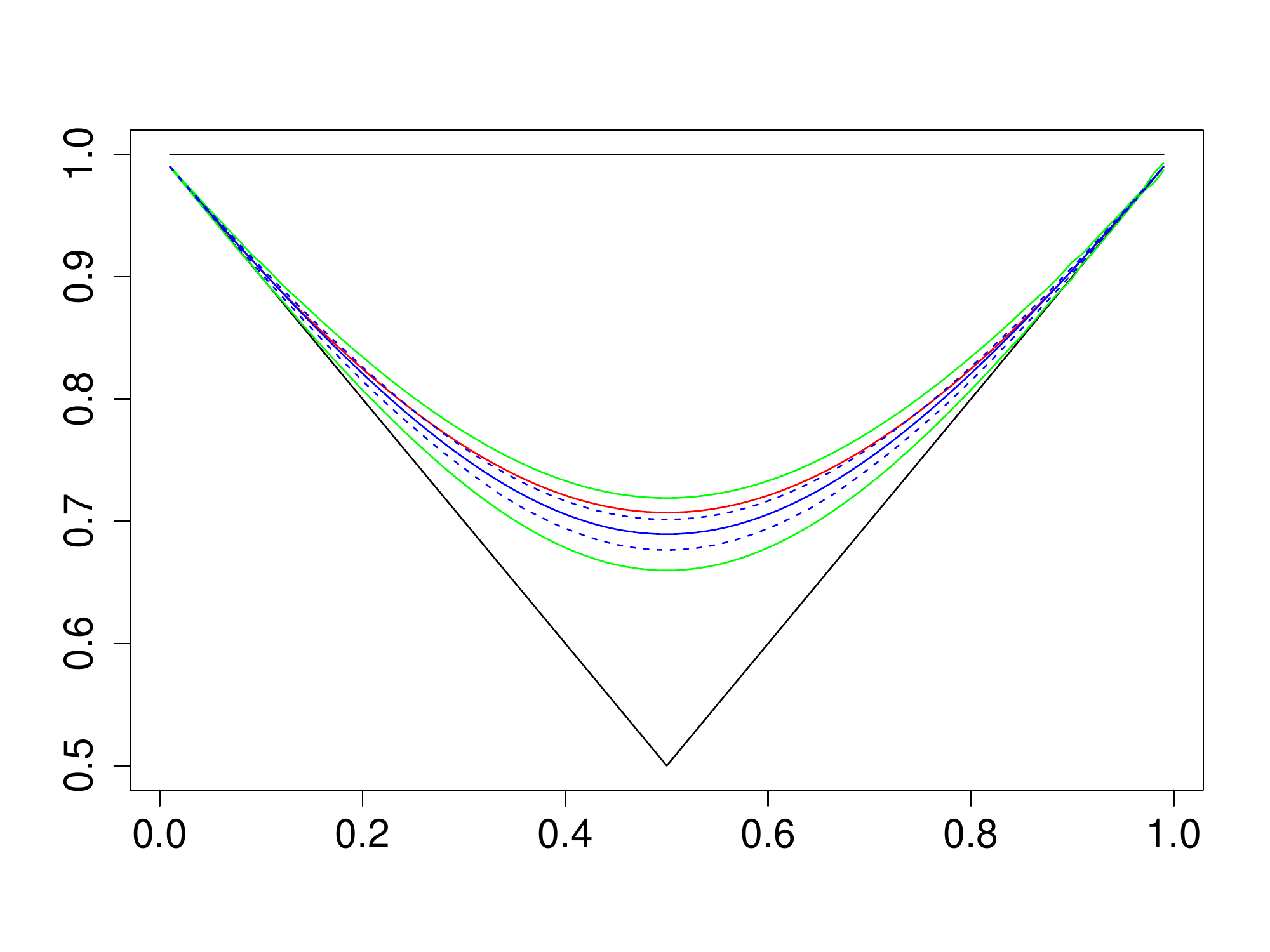}\\
\includegraphics[width=0.47\textwidth,trim= 10mm 10mm 10mm 10mm]{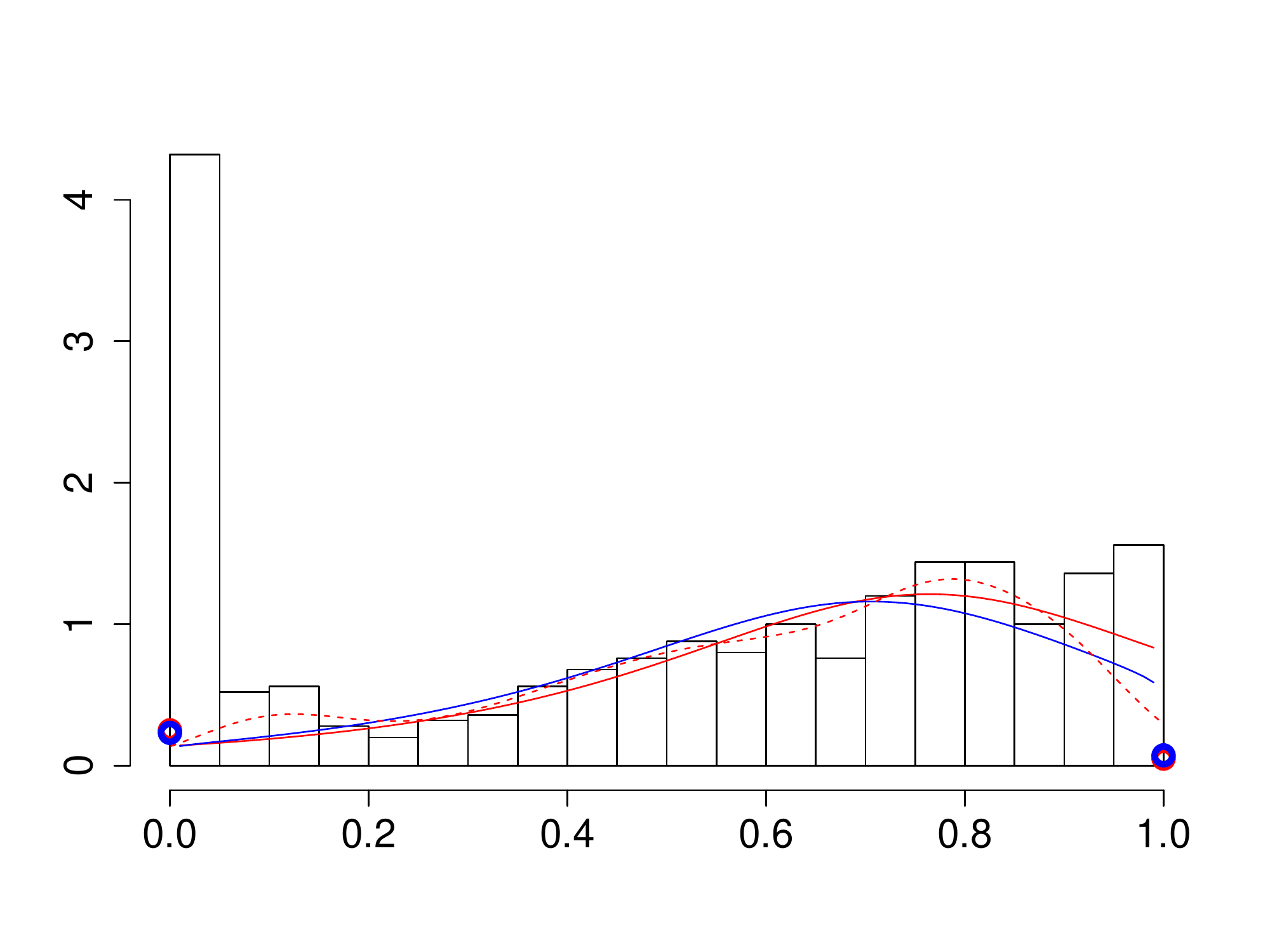}
&\includegraphics[width=0.47\textwidth,trim= 10mm 10mm 10mm 10mm]{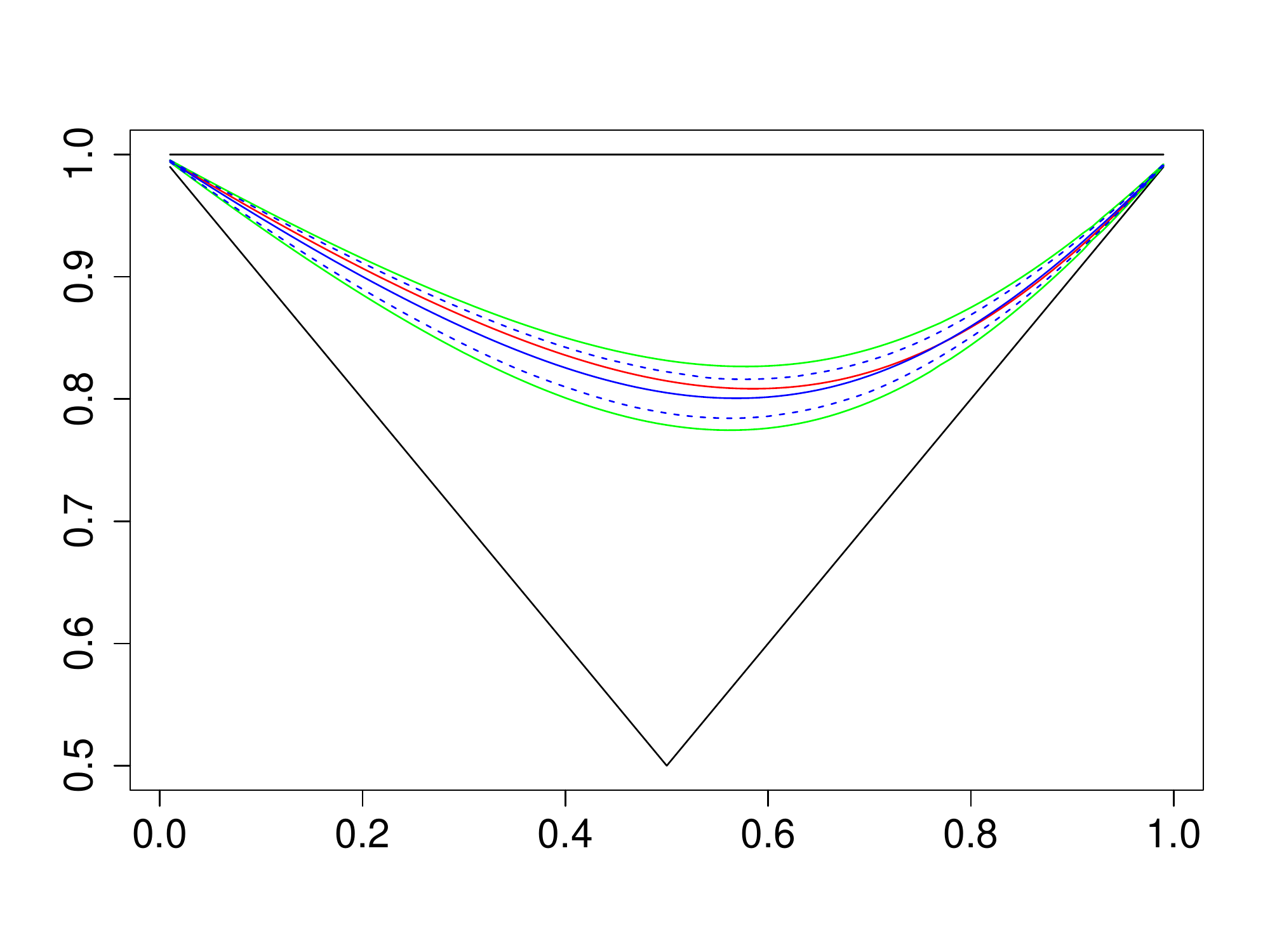}
\end{tabular}
\caption{Left column: histogram of approximate~$Y$, the true spectral distribution (solid red), the fitted distribution (blue), non-parametric density estimate used to compute divergence (dashed red).
Right column: $A_{\textup{true}}(z)$ (red), fitted $A(z)$ (solid blue) with its bootstrap bounds (dashed blue), and the robust square-root bounds (green). The rows correspond to experiments $\#$1--4 described in Table \ref{table:exp}, respectively.}
\label{fig:plots}
\end{center}
\end{figure}

Let us briefly discuss the choice of a dominating measure~$\mu$. We use  $\mu=\p$, i.e., the classical R\'enyi divergence as defined in~\eqref{eq:divergence}, whenever possible, that is, whenever~$\delta_{\textup{true}}$ is finite. 
This is the case when fitting extremal-$t$ in experiments $\#1$ and $\#3$ and asymmetric logistic in $\#4$, but not in experiment $\#2$. Even though the true symmetric logistic density with no point masses is absolutely continuous with respect to the fitted H\"usler--Reiss density, the R\'enyi divergence of the former from the latter is infinite, because the H\"usler--Reiss density decays faster than any power at~0. Therefore, we take $\mu=$~Leb as a dominating measure in this case. Notice however, that the Lebesgue dominating measure does not allow for point masses which is desired in the other experiments.


Our experiments show that the easily computable robust square-root bounds can 
be applied effectively to measure uncertainty related to misspecified dependence 
structures in multivariate extremes. These readily available bounds are often exact, or very close to the exact robust bounds, see also Section~\ref{sec:illustration}. Thus the more challenging computation of the exact bounds is usually not required.
Let us note that estimation of $\delta$ can be subtle, but it can be improved by an adequate choice of the dominating measure $\mu$.  
Another important problem concerns reliable estimation of $\delta$ when data is coming from a pre-limit distribution. We leave these statistical questions for future research.

\section{Robust bounds on the Value-at-Risk of a portfolio}\label{sec:var}
In recent years diversification effects in heavy-tailed portfolios received considerable attention; see~\cite{mainik_embrechts,mainik_var,zhou}. Suppose that $\bs Z$ is a $d$-dimensional vector of dependent risk factors, and consider the
portfolio $P = \sum_{i=1}^d w_i Z_i$, where $w_1,\dots, w_d\geq 0$ are some non-negative weights, not all being~0; one may assume that $\sum_i w_i=1$ but this is not required. In order to have comparable risks one assumes that $\bs Z$ is multivariate regularly varying with some index $\a>0$ and non-degenerate exponent measure~$\nu$, so that~\eqref{eq:Fequiv} holds true.

Let $\VaR_i(p)$ be the \emph{Value-at-Risk} of the $i$th component, i.e., it satisfies $\overline F_i(\VaR_i(p))=p$, where $p>0$ is a number close to~0. 
It follows that $\VaR_i(p)$ is regularly varying at $0$ with index $-1/\a$, and moreover from~\eqref{eq:Fequiv} we find 
\begin{equation}\label{eq:vari}\frac{\VaR_i(p)}{\VaR_1(p)}\to m_i^{1/\a}\qquad\text{ as }p\downarrow 0.\end{equation}
In the following we consider the Value-at-Risk $\VaR_P(p)$ of the portfolio~$P$ and provide the corresponding asymptotic robust bounds.

As discussed in Section~\ref{sec:reg_var}, it is a common procedure to first estimate the marginal tails and then to address tail dependence, comparable to the copula concept in multivariate modeling. In this work we focus on the latter, more difficult task, and so we assume that the marginal survival functions $\overline F_i$ are correctly specified. 
Transforming the marginals to unit Pareto
\[\widehat Z_i=\frac{1}{\overline F_i(Z_i)},\]
 we obtain normalized multivariate regularly varying $\bs{\widehat Z}$, to which we associate $\widehat{\bs Y}\in\mathbb S^{d-1}$ having the corresponding spectral distribution.

According to~\cite[Thm.\ 3.1]{zhou} the Value-at-Risk $\VaR_P(p)$ of the portfolio~$P$  satisfies
\begin{equation}\label{eq:varP}\left(\frac{\VaR_P(p)}{\VaR_1(p)}\right)^\a\to d\e\left(\sum_{i=1}^d w_i(m_i \widehat Y_i)^{1/\a}\right)^\a\qquad\text{ as }p\downarrow 0.\end{equation}
That is, the Value-at-Risk of the portfolio is asymptotically equivalent to the Value-of-Risk of every individual risk factor up to a diversification constant, which is easily identified from~\eqref{eq:varP} and~\eqref{eq:vari}.
In particular, letting
\begin{equation}\label{eq:X} X = \left(\sum_{i=1}^d w_i(m_i\widehat Y_i)^{1/\a} \right)^\a\end{equation}
we have the approximation for small $p>0$
\[\VaR_P(p)\approx\VaR_1(p)(d\e X)^{1/\a}.\]

Suppose now that our model $\p$ for the extremal dependence between the risk factors, i.e., for the spectral distribution~$\widehat{\bs Y}$, is subject to model uncertainty.
A prominent problem in the financial literature on model uncertainty is to obtain worst case bounds on the
Value-at-Risk of a portfolio \citep[cf.,][]{Embrechts2013,Embrechts2015}. In this regard we may directly apply our results from Sections~\ref{sec:convex} and~\ref{sec:rob_bounds} by considering the optimization problem \eqref{eq:opt} with $X$ given in~\eqref{eq:X} and the moment constraints $\e \widehat Y_i=1/d$. Let us note again that there are essentially $d-1$ constraints since $\widehat Y_d = 1 - \sum_{i=1}^{d-1} \widehat Y_i$.
For fixed uncertainty radius $\delta >0$, Theorem~\ref{thm:opt} yields the desired exact worst case bounds on $\VaR_P(p)$ which are found numerically by solving a system of $d+1$ non-linear equations.
In higher dimension, solving these equations might be computationally challenging.
On the other hand, the upper and lower square-root bounds in Theorem \ref{thm:main} and Corollary \ref{cor:main} coincide with the exact bounds for $\delta < \delta^*$ and $\delta< \delta_*$, respectively, and 
are otherwise very good approximations. Moreover, they can be easily computed even in higher dimensions. Indeed we only need to evaluate the covariance matrix~$\Sigma_\mu(X,\widehat{\bs Y})$ with respect to the chosen dominating measure~$\mu$. In the default case $\mu=\p$ this can be done, for instance, by Monte Carlo methods based on independent samples from~$\widehat{\bs Y}$. An algorithm for exact and efficient simulation of $\widehat{\bs Y}$ can be found in~\cite{dom2016}.

Differently to~\cite{zhou} and the above discussion, in~\cite{mainik_embrechts} the asymptotic relation between $\VaR_P(p)$ and $\VaR_1(p)$
is expressed using the spectral distribution~$\bs Y$ of the original non-standardized~$\bs Z$.
This approach avoids separating the problem into marginal tail estimation and estimation of the tail dependence structure, which may be beneficial in some situations. It does not, however, allow to use standard models for the spectral measure.
Moreover, in this setting the marginal $\VaR_i(p)$ are affected by a change of the distribution of~$\bs Y$, and the asymptotic expression for the ratio:
\[\left(\frac{\VaR_P(p)}{\VaR_1(p)}\right)^\a\to \e\left(\sum_{i=1}^d w_i Y_i\right)^\a/\, \, \e Y_1^\a\qquad\text{ as }p\downarrow 0,\]
see~\cite[Cor.\ 2.3]{mainik_embrechts},
does not fit into our framework since it is given by a ratio of expectations. A possible way around this problem is to express $\VaR_P(p)$ using the slowly varying function corresponding to the scaling sequence $a_t$ in~\eqref{eq:reg_var}.

\section*{Acknowledgments}
We are grateful to Enkelejd Hashorva for useful remarks, and to two anonymous referees for in-depth reading of this manuscript and for various comments that substantially improved the paper. Financial support by the Swiss National Science Foundation Project 161297 (first author) and the T.N.~Thiele Center at Aarhus University (second author) is gratefully acknowledged.

\appendix
\section*{Appendix}
\renewcommand{\thesection}{A} 
\subsection{Some parametric families of spectral distributions}\label{appendix_distributions}
In the following we provide several commonly used parametric models
for the spectral distribution~$H$ of $(Y_1,Y_2)\in\mathbb S^1$ in the bivariate case under $L_1$-norm. Without loss of generality, we restrict our attention to the first component~$Y=Y_1$, so that $H$ is a probability measure on $[0,1]$ equipped with its Borel $\sigma$-algebra. The following formulas are known but not readily available in the literature, and so we present them here for completeness.

\subsubsection{ H\"usler--Reiss}\label{sec:HR}
If the max-stable distribution is a bivariate H\"usler--Reiss distribution
with dependence parameter $\lambda \in (0,\infty)$, then the density of the
corresponding spectral distribution is
$$h_\lambda(\omega) = \frac{1}{\omega^2( 1-\omega)4\lambda } \frac{1}{\sqrt{2\pi}} \exp\left\{-\frac12\left(\lambda + \frac{\log\frac{1-\omega}{w}}{2\lambda}\right)^2\right\}, \quad \omega \in[0,1].$$
It is easy to check that this distribution is symmetric around $1/2$ and that $\e Y = 1/2$.
Moreover, $A(1/2) = \Phi(\lambda)$, where $\Phi$ is
the standard normal distribution function.

\subsubsection{Asymmetric logistic}\label{sec:AL}
If the max-stable distribution is an asymmetric logistic distribution
with dependence parameter $a \in (0,1)$ and asymmetry parameters $b_1,b_2\in[0,1]$, then the
corresponding spectral distribution has
point masses $\p(Y=0) = (1-b_2)/2$ and  $\p(Y=1) = (1-b_1)/2$,
and the density is
\begin{equation}\label{eq:AL_dens}
h_{a,b_1,b_2}(\omega) = \frac{1-a}{2a} \frac{(b_1b_2)^{1/a}}{ (\omega(1-\omega))^{1 + 1/a }} \left\{ \left(\frac{b_1}{\omega}\right)^{1/a} + \left(\frac{b_2}{1-\omega}\right)^{1/a}\right\}^{a-2}, \quad \omega \in(0,1).
\end{equation}

\subsubsection{Extremal-$t$}\label{sec:ET}
If the max-stable distribution is an extremal-$t$ distribution with
parameters $\rho\in[-1,1]$ and $a >0$, then the distribution  of the
corresponding spectral distribution has
point masses 
$$\p(Y=0) = \p(Y=1) = 1 - F_{a+1}\left\{ \rho \sqrt{\frac{a+1}{1-\rho^2}}\right\},$$ 
and the density for $\omega \in(0,1)$ is 
$$h_{\rho,a}(\omega) = \frac{(1-\rho^2)^{\frac{a+1}{2}}\Gamma(\frac{a +2}{2})}{2a \sqrt{\pi} \Gamma(\frac{a +1}{2})} (\omega(1-\omega))^{1/a-1} \left\{ \omega^{2/a} - 2\rho(\omega(1-\omega))^{1/a} + (1-\omega)^{2/a}\right\}^{-\frac{a+2}{2}}.$$
Here, $F_a$ is the $t$-distribution function with $a > 0$ degrees of freedom, i.e.,
$$F_a(x) = \frac{\Gamma(\frac{a +1}{2})}{\sqrt{a\pi}\Gamma(\frac{a }{2})} \int_{-\infty}^x \left( 1 + \frac{t^2}{a}\right)^{-\frac{a +1}{2}} \mathrm d t,  \quad x\in\mathbb R.$$

\subsection{Degenerate optimizers for the Pickands' function}\label{sec:delta_starstar}
In this section we study the degenerate case (ii) of Theorem~\ref{thm:opt} for the Pickands' dependence function~$A(z)$ defined in~\eqref{eq:pickands}, see also Section~\ref{sec:pickands_bounds}. That is, we assume that 
\begin{equation}\label{eq:Xpickands}
X=X(z)=2\{(1-z)Y\vee z(1-Y)\},\qquad Y\in[0,1]
\end{equation}
for some fixed $z\in[0,1]$, and that the constraint is~$\e Y=1/2$.
 Our aim is to identify the corresponding optimal values and the thresholds $\delta^{**}$ and $\delta_{**}$, see Remark~\ref{rem:case_ii}. The following result shows that the degenerate case corresponds to the trivial bounds $z\vee(1-z)\leq A(z)\leq 1$ as expected, but a certain assumption on the $\mu$-support of $Y$ is necessary.
\begin{lemma}\label{lem:mass}
Consider~\eqref{eq:Xpickands} and assume
that $\mu$-support of $Y$ contains~$0,z,1$. Then the case (ii) of Theorem~\ref{thm:opt} occurs if and only if there exists a Radon--Nikodym derivative $L^*$ (with respect to $\mu$) such that
\[\Em(L^*-L)^2\leq \delta,\qquad\p^*(Y=0)=\p^*(Y=1)=1/2,\] 
in which case $\e^* X=1$.

In case of the minimization problem the corresponding requirement on a Radon--Nikodym derivative $L_*$ is
\[\Em(L_*-L)^2\leq \delta,\qquad\begin{cases}\p_*(Y\geq z)=1, &z<1/2,\\
\p_*(Y=1/2)=1, &z=1/2,\\
\p_*(Y\leq z)=1, &z>1/2,
\end{cases}\qquad \e_* Y=1/2,\]
in which case $\e_* X=z\vee(1-z)$. 
\end{lemma}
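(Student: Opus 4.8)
The plan is to make the piecewise-linear structure of $X(z)$ explicit and then read off the degenerate optimizers from the convexity of the tilted objective $X+cY$. Writing out the maximum in~\eqref{eq:Xpickands} gives
\[
X(z)=\begin{cases}2z(1-Y), & 0\le Y\le z,\\ 2(1-z)Y, & z\le Y\le 1,\end{cases}
\]
so $X$ is convex in $Y$, piecewise linear, with $X(0)=2z$, $X(1)=2(1-z)$ and global minimum $2z(1-z)$ at $Y=z$. Since case (ii) of Theorem~\ref{thm:opt} involves a single scalar multiplier $c$, I would study $U:=X+cY$, which is again convex in $Y$; consequently its maximum over $[0,1]$ is attained only at the endpoints, and the argmax is $\{0\}$, $\{1\}$ or $\{0,1\}$ according to the sign of $U(0)-U(1)=2z-(2(1-z)+c)=-(c-(4z-2))$.

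For the maximization direction, a case-(ii) maximizer $L^*$ must vanish off the upper end of $U$, i.e. off this argmax. If $c\ne 4z-2$ the argmax is a single endpoint, forcing $\p^*(Y=0)=1$ or $\p^*(Y=1)=1$ and hence $\e^* Y\in\{0,1\}$, incompatible with $\e^* Y=1/2$. Thus necessarily $c=4z-2$, for which $U\equiv 2z$ on $\{0,1\}$ while $U<2z$ on $(0,1)$ (the two pieces then have slopes $2z-2<0$ and $2z>0$ for $z\in(0,1)$). Writing $p_0=\p^*(Y=0)$, $p_1=\p^*(Y=1)$, the constraints $\Em L^*=p_0+p_1=1$ and $\e^* Y=p_1=1/2$ force $p_0=p_1=1/2$, whence $\e^* X=\tfrac12 X(0)+\tfrac12 X(1)=z+(1-z)=1$. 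Conversely, any $L^*$ with $\p^*(Y=0)=\p^*(Y=1)=1/2$ and $\Em(L^*-L)^2\le\delta$ vanishes off $\{U=2z\}$ and satisfies the equality constraints, so by Theorem~\ref{thm:opt} it is a case-(ii) maximizer; here the hypothesis that $0,1$ lie in the $\mu$-support is used to guarantee that $2z$ is the essential supremum of $U$, so that the upper end is exactly $\{0,1\}$.

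For the minimization direction I would apply the same analysis to $-X$, equivalently study the convex function $W:=X-cY$ and its argmin. The argmin is the interior kink $\{z\}$ when both one-sided slopes are strict ($-2z<c<2(1-z)$), the flat edge $[z,1]$ when the right slope vanishes ($c=2(1-z)$), and the flat edge $[0,z]$ when the left slope vanishes ($c=-2z$); otherwise it is a single endpoint. The constraint $\e_* Y=1/2$ again selects a unique admissible configuration: for $z<1/2$ one must take $c=2(1-z)$, so $L_*$ is supported on $\{Y\ge z\}$, on which $X=2(1-z)Y$ is linear, giving $\e_* X=2(1-z)\,\e_* Y=1-z$; for $z>1/2$ one takes $c=-2z$, supported on $\{Y\le z\}$ with $X=2z(1-Y)$, giving $\e_* X=z$; and for $z=1/2$ the argmin is $\{1/2\}$, forcing $\p_*(Y=1/2)=1$ and $\e_* X=1/2$. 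In all cases $\e_* X=z\vee(1-z)$, and the converse is read off exactly as in the maximization case.

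The routine parts are the explicit slope computations and the verification of the equality constraints. The main obstacle I anticipate is the bookkeeping that the endpoint/edge trichotomy is exhaustive and that the moment constraint leaves precisely one admissible multiplier $c$ in each regime; a secondary subtlety is matching the abstract requirement of positive $\mu$-mass at the upper (respectively lower) end of $U$ (respectively $W$) with the concrete atom description for maximization and the concrete region description for minimization, which is exactly where the assumption that $0,z,1$ belong to the $\mu$-support enters, pinning the location of the essential supremum and infimum.
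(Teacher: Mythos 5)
Your argument is correct and is essentially the paper's own proof, only written out in full: the paper likewise observes that the convex function $X+cY$ attains its maximum at $Y\in\{0,1\}$ (and its minimum at the kink $z$ or on one of the flat edges $[0,z]$, $[z,1]$) and then lets the constraint $\e^*Y=\e_*Y=1/2$ pin down the unique admissible configuration and the resulting optimal values $1$ and $z\vee(1-z)$. Your explicit slope computations and the remark on where the support hypothesis enters are exactly the details the paper leaves to the reader.
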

\begin{proof}
Observe that the maximum of $X+c Y$ is obtained for $Y=1$ or $Y=0$ or both (draw a picture). 
Since $\e^* Y=1/2$ we must have $\p^*(Y=0)=\p^*(Y=1)=1/2$, which yields the result. 

The minimum of $X+cY$ is obtained either at $Y\leq z$ or at $Y\geq z$ or at the single points $0,z,1$ ($z$ is the bending point). Again the constraint $\e_*Y=1/2$ leads to the result. The corresponding optimal value is $2(1-z)\e_* Y=1-z$ when $z\leq 1/2$, and it is $z$ when $z>1/2$.
\end{proof} 

For the maximization problem, in the case of $\mu=\Leb$, it is impossible to have $\p^*(Y=0),\p^*(Y=0)>0$ and so according to Lemma~\ref{lem:mass} there cannot be a degenerate maximizer for any $\delta$, i.e.~$\delta^{**}=\infty$.
In the case of $\mu=\p$ we have the following:
\begin{equation}\label{eq:mass}\delta^{**}=\frac{1}{4p_0}+\frac{1}{4p_1}-1,\end{equation}
where $p_i=\p(Y=i)$, and in particular $p_0$ and $p_1$ must be positive to have $\delta^{**}<\infty$. 
This follows from Lemma~\ref{lem:mass} and the following arguments. Note that
\[\e {L^*}^2= \e({L^*}^2|Y=0)p_0+\e({L^*}^2|Y=1)p_1\geq l_0^2p_0+l_1^2p_1,\]
where $l_i=\e(L^*|Y=i)$ and so $l_0p_0=l_1p_1=1/2$. Hence, $L^*=l_0\1{Y=0}+l_1\1{Y=1}$ guarantees the minimal value for $\e {L^*}^2$ among the allowed ones for any fixed $\delta$. Therefore, a sufficient and necessary condition for existence of a degenerate maximizer is $l_0^2p_0+l_1^2p_1-1\leq\delta$, which readily yields~\eqref{eq:mass}.

For the minimization problem, the case of $z=1/2$ is easy, $\delta_{**}=\infty$ for $\mu=\Leb$, and $\delta_{**}=1/\p(Y=1/2)-1$ for $\mu=\p$.
For $z\neq 1/2$ the value of $\delta_{**}$ depends on the distribution of $Y$ on $Y\geq z$ if~$z<1/2$ (on $Y\leq z$ if $z>1/2$). 
More precisely, we need to identify a Radon--Nikodym derivative $L_{**}$ which assigns no mass to $Y<z$, satisfies the constraints $\Em(L_{**})=1,\Em(L_{**}Y)=1/2$ and minimizes $\Em ({L_{**}-L})^2$.
This optimization problem is solved by $L_{**}=(b+cY+L)_+\1{Y\geq z}$ for $b,c\in\R$ such that the constraints hold. Finally, the minimal value $\Em ({L_{**}-L})^2$ is our~$\delta_{**}$.

\subsection{Optimization for R\'enyi and Kullback--Leibler divergences}\label{appendix_KL}
For completeness, we consider our optimization problem~\eqref{eq:opt} for some other popular divergences: R\'enyi divergence of order~$\eta>1$ given by 
\[\widehat D_\eta(\p',\p)=\frac{1}{\eta-1}\log\e L'^\eta,\] and Kullback--Leibler divergence given by
\[\widehat D_1(\p',\p)=\e (L'\log L'),\]
where it is assumed that $\p'\ll\p$ with $L'=\D \p'/\D\p$ and that the dominating measure~$\mu$ coincides with~$\p$.
An easy adaptation of the proof of Theorem~\ref{thm:opt} shows that a maximizer $\p^*$ of \[\sup_{\p'}\{\e' X:\widehat D_\eta(\p',\p)\leq \delta,\e' \bs Y=\e\bs Y\}\] must have a Radon--Nikodym derivative~$L^*\geq 0$ which satisfies $\e L^*=1,\e (L^*\bs Y)=\e\bs Y$ and one of then following:
\begin{itemize}
\item [(i)] $\widehat D_\eta(\p^*,\p)=\delta$ and there exist $a>0,b,c_i\in\R$ such that
\begin{align}
\label{eq:power} L^*&=(aX+b+\bs c^\top\bs Y)_+^{1/(\eta-1)}\quad \text{a.s.}&\text{when }\eta>1,\\
\label{eq:KL} L^*&=\exp(aX+b+\bs c^\top\bs Y)\quad\text{a.s.}&\text{when }\eta=1,
\end{align}
\item [(ii)]there exist $c_i$ such that the distribution of $X+\bs c^\top\bs Y$ has a positive mass at its right end, $L^*=0$ everywhere else a.s., and the constraint $\widehat D_\eta(\p^*,\p)\leq \delta$ holds.
\end{itemize}
Conversely, any such $L^*$ corresponds to a maximizer~$\p^*$.
In the case $\eta>1$ we assume that $\e |X|^{\eta/(\eta-1)},\e |Y|^{\eta/(\eta-1)}<\infty$, and in the case $\eta=1$ we assume that 
$\e(|X|L'),\e(|Y_i|L')<\infty$ for all $L'$ satisfy $\e (L'\log L')\leq \delta$. 
Furthermore, regardless of these assumptions, if there exists~$L^*$ as above and such that $\e(|X|L^*),\e(|Y_i|L^*)<\infty$ then it must be a maximizer, which can be seen by considering an appropriate convex subset of $L'$ in the proof of Theorem~\ref{thm:opt}.

%

Note that taking $\eta=2$ we retrieve the result of Theorem~\ref{thm:opt} for $\mu=\p$. In the case $d=1$ (no moment constraints) the expression for $L^*$ in (i) appears in e.g.~\cite{blanchet2016extreme}. Furthermore,~\cite{breuer2013measuring} considers more general divergences but the results are less explicit.
Finally, we elaborate on the case of Kullback--Leibler divergence extending the result of~\cite{ahmadi} by introducing moment constraints.


\begin{prop}
Assume that $X,Y_i\geq 0$ are positive random variables with finite expectation, and $G(a,\bs c)=\e e^{a X+\sum_i c_i Y_i}$ is finite on some domain $\mathcal D\subset (0,\infty)\times\R^{d}$ with non-empty interior. Suppose there exist $(a,\bs c)\in\mathcal D$ such that 
\[\frac{G_i(a,\bs c)}{G(a,\bs c)}=\e Y_i,\quad a\frac{G_0(a,\bs c)}{G(a,\bs c)}+\sum_{i=1}^d c_i\e Y_i-\log G(a,\bs c)=\delta,\]
where $G_i(\cdot)$ is a derivative with respect to the $i$th variable (pointing inside the domain if on the boundary).
Then \[V^{KL}(\delta)=\sup_{\p'}\{\e' X:\widehat D_1(\p',\p)\leq \delta,\e'\bs Y=\e \bs Y\}=\frac{G_0(a,\bs c)}{G(a,\bs c)},\]
which corresponds to the exponential change of measure $L^*=e^{a X+\sum_i c_iY_i}/G(a,\bs c)$.
\end{prop}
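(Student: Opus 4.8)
The plan is to reduce the statement to the general characterization of Kullback--Leibler maximizers established earlier in this appendix: any $L^*\geq 0$ of the exponential form $\exp(aX+b+\bs c^\top\bs Y)$ that satisfies $\e L^*=1$, $\e(L^*\bs Y)=\e\bs Y$, $\widehat D_1(\p^*,\p)=\delta$, and the integrability conditions $\e(XL^*),\e(Y_iL^*)<\infty$ is automatically a maximizer. Accordingly, the whole argument consists in checking that the exponential tilt
\[
L^*=\frac{e^{aX+\sum_{i=1}^d c_iY_i}}{G(a,\bs c)},
\]
built from the hypothesized solution $(a,\bs c)$ of the two displayed equations, meets each of these requirements; the choice of normalizing constant $b=-\log G(a,\bs c)$ is precisely what turns the generic exponential into a bona fide probability density.

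First I would differentiate $G$ under the expectation sign. Since $G$ is finite on a domain $\mathcal D$ with non-empty interior and $(a,\bs c)\mapsto e^{aX+\sum_i c_iY_i}$ is smooth, the classical theory of Laplace transforms (analyticity on the interior of the domain of finiteness) yields
\[
G_0(a,\bs c)=\e\bigl(Xe^{aX+\sum_i c_iY_i}\bigr),\qquad G_i(a,\bs c)=\e\bigl(Y_ie^{aX+\sum_i c_iY_i}\bigr),
\]
and guarantees that both are finite. Dividing by $G$ gives $\e(L^*X)=G_0/G<\infty$ and $\e(L^*Y_i)=G_i/G<\infty$, which supplies exactly the integrability required by the characterization (recall $X,Y_i\geq 0$, so these coincide with $\e(|X|L^*),\e(|Y_i|L^*)$).

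Next I would verify the constraints. Normalization $\e L^*=1$ is immediate from the definition of $G$, and the moment constraints $\e(L^*Y_i)=G_i/G$ equal $\e Y_i$ by the first hypothesis. For the divergence, substitute $\log L^*=aX+\sum_i c_iY_i-\log G$ into $\widehat D_1(\p^*,\p)=\e(L^*\log L^*)$ and expand by linearity, using $\e L^*=1$, $\e(L^*X)=G_0/G$ and $\e(L^*Y_i)=\e Y_i$, to obtain
\[
\widehat D_1(\p^*,\p)=a\,\frac{G_0}{G}+\sum_{i=1}^d c_i\,\e Y_i-\log G,
\]
which is exactly $\delta$ by the second hypothesis. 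Thus the divergence constraint binds, the characterization certifies that $L^*$ is a maximizer, and its value is $V^{KL}(\delta)=\e(L^*X)=G_0/G$.

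The hard part will be the differentiation-under-the-expectation step, in particular when $(a,\bs c)$ lies on the boundary of $\mathcal D$. There only the one-sided derivatives pointing into the domain are available, so dominated convergence must be replaced by a monotone-convergence argument along interior points approaching $(a,\bs c)$, and one has to confirm that the resulting directional derivatives still coincide with the tilted moments $\e(XL^*)$ and $\e(Y_iL^*)$ and remain finite. Once this identification is secured, everything else is the routine algebra carried out above.
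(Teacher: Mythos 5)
Your proposal is correct and follows essentially the same route as the paper: both reduce the claim to the sufficiency part of the general Kullback--Leibler characterization (the exponential form \eqref{eq:KL} together with the remark that finiteness of $\e(XL^*)$ and $\e(Y_iL^*)$ certifies a maximizer), identify $b=-\log G(a,\bs c)$ as the normalizing constant, and translate the constraints into the stated equations on $G$ and its derivatives. Your version is slightly more explicit about differentiating under the expectation and about the boundary case of $\mathcal D$, points the paper passes over in silence, but the argument is the same.
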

\begin{proof}
According to~\eqref{eq:KL} we consider
\[L^*=\exp(a X+b+\sum_ic_iY_i)=:\exp(U),\quad a>0,b,c_i\in\R\]
together with the constraints: $\e e^U=1,\e (Ue^U)=\delta,\e(Y_ie^U)=\e Y_i$. 
We may rewrite these using the moment generating function $G$:
\begin{align*}
G(a,\bs c)&=e^{-b},\\
a G_0(a,\bs c)+b G(a,\bs c)+\sum_{i} c_iG_i(a,\bs c)&=\delta e^{-b},\\
e^b G_i(a,\bs c)&=\e Y_i,\\
V^{KL}(\delta)&=e^b G_0(a,\bs c).
\end{align*}
The equations in the statement are now immediate. Finally, we note that $\e(XL^*)$ and $\e(Y_iL^*)$ are finite which completes the proof.
\end{proof}
%


\end{document}